\documentclass[a4paper, leqno, 10pt]{amsart}
\usepackage[a4paper,top=1.1in,bottom=1.1in,left=1.1in,right=1.1in, bindingoffset=0mm]{geometry}

\usepackage[T1]{fontenc}	
\usepackage{graphicx}
\usepackage{amssymb,centernot}
\usepackage{upgreek}
\usepackage{mathrsfs}
\usepackage[usenames,dvipsnames]{xcolor}
\usepackage[inline,shortlabels]{enumitem}
\usepackage[sort,numbers]{natbib}						
\usepackage{verbatim}								
\usepackage{dsfont}									
\usepackage{microtype}

\usepackage{pdflscape}
\usepackage{afterpage}

\usepackage{faktor}
\usepackage{anyfontsize}
\usepackage[
						colorlinks,				
						breaklinks,unicode,
						hypertexnames=false,
						citecolor=OliveGreen,
						linkcolor=Maroon
						]{hyperref} 

\usepackage{multirow}

\usepackage{xypic}
\usepackage{mathtools}
\mathtoolsset{showonlyrefs}

\usepackage{xspace}

\usepackage{booktabs}

\usepackage{tikz}

\usetikzlibrary{calc, positioning, shapes.geometric, patterns, cd}

\newlength{\hatchspread}
\newlength{\hatchthickness}
\newlength{\hatchshift}
\newcommand{\hatchcolor}{}
\tikzset{hatchspread/.code={\setlength{\hatchspread}{#1}},
         hatchthickness/.code={\setlength{\hatchthickness}{#1}},
         hatchshift/.code={\setlength{\hatchshift}{#1}},
         hatchcolor/.code={\renewcommand{\hatchcolor}{#1}}}
\tikzset{hatchspread=3pt,
         hatchthickness=0.4pt,
         hatchshift=0pt,
         hatchcolor=black}
\pgfdeclarepatternformonly[\hatchspread,\hatchthickness,\hatchshift,\hatchcolor]
   {custom north west lines}
   {\pgfqpoint{\dimexpr-2\hatchthickness}{\dimexpr-2\hatchthickness}}
   {\pgfqpoint{\dimexpr\hatchspread+2\hatchthickness}{\dimexpr\hatchspread+2\hatchthickness}}
   {\pgfqpoint{\dimexpr\hatchspread}{\dimexpr\hatchspread}}
   {
    \pgfsetlinewidth{\hatchthickness}
    \pgfpathmoveto{\pgfqpoint{0pt}{\dimexpr\hatchspread+\hatchshift}}
    \pgfpathlineto{\pgfqpoint{\dimexpr\hatchspread+0.15pt+\hatchshift}{-0.15pt}}
    \ifdim \hatchshift > 0pt
      \pgfpathmoveto{\pgfqpoint{0pt}{\hatchshift}}
      \pgfpathlineto{\pgfqpoint{\dimexpr0.15pt+\hatchshift}{-0.15pt}}
    \fi
    \pgfsetstrokecolor{\hatchcolor}
    \pgfusepath{stroke}
   }

\pgfdeclarepatternformonly[\hatchspread,\hatchthickness,\hatchshift,\hatchcolor]
   {custom north east lines}
   {\pgfqpoint{\dimexpr-2\hatchthickness}{\dimexpr-2\hatchthickness}}
   {\pgfqpoint{\dimexpr\hatchspread+2\hatchthickness}{\dimexpr\hatchspread+2\hatchthickness}}
   {\pgfqpoint{\dimexpr\hatchspread}{\dimexpr\hatchspread}}
   {
    \pgfsetlinewidth{\hatchthickness}
    \pgfpathmoveto{\pgfqpoint{\dimexpr\hatchshift-0.15pt}{-0.15pt}}
    \pgfpathlineto{\pgfqpoint{\dimexpr\hatchspread+0.15pt}{\dimexpr\hatchspread-\hatchshift+0.15pt}}
    \ifdim \hatchshift > 0pt
      \pgfpathmoveto{\pgfqpoint{-0.15pt}{\dimexpr\hatchspread-\hatchshift-0.15pt}}
      \pgfpathlineto{\pgfqpoint{\dimexpr\hatchshift+0.15pt}{\dimexpr\hatchspread+0.15pt}}
    \fi
    \pgfsetstrokecolor{\hatchcolor}
    \pgfusepath{stroke}
   }

\makeatletter
\newcommand*{\centerfloat}{%
  \parindent \z@
  \leftskip \z@ \@plus 1fil \@minus \textwidth
  \rightskip\leftskip
  \parfillskip \z@skip}
\makeatother

\usepackage{xparse}

\ExplSyntaxOn
\NewDocumentCommand{\makeabbrev}{mmm}
 {
  \yoruk_makeabbrev:nnn { #1 } { #2 } { #3 }
 }

\cs_new_protected:Npn \yoruk_makeabbrev:nnn #1 #2 #3
 {
  \clist_map_inline:nn { #3 }
   {
    \cs_new_protected:cpn { #2 } { #1 { ##1 } }
   }
 }
 \ExplSyntaxOff

\makeabbrev{\textbf}{tbf#1}{a,b,c,d,e,f,g,h,i,j,k,l,m,n,o,p,q,r,s,t,u,v,w,x,y,z,A,B,C,D,E,F,G,H,I,J,K,L,M,N,O,P,Q,R,S,T,U,V,W,X,Y,Z}

\makeabbrev{\textbf}{bf#1}{a,b,c,d,e,f,g,h,i,j,k,l,m,n,o,p,q,r,s,t,u,v,w,x,y,z,A,B,C,D,E,F,G,H,I,J,K,L,M,N,O,P,Q,R,S,T,U,V,W,X,Y,Z}

\makeabbrev{\textsf}{tsf#1}{a,b,c,d,e,f,g,h,i,j,k,l,m,n,o,p,q,r,s,t,u,v,w,x,y,z,A,B,C,D,E,F,G,H,I,J,K,L,M,N,O,P,Q,R,S,T,U,V,W,X,Y,Z}

\makeabbrev{\mathsf}{mss#1}{a,b,c,d,e,f,g,h,i,j,k,l,m,n,o,p,q,r,s,t,u,v,w,x,y,z,A,B,C,D,E,F,G,H,I,J,K,L,M,N,O,P,Q,R,S,T,U,V,W,X,Y,Z}

\makeabbrev{\mathfrak}{mf#1}{a,b,c,d,e,f,g,h,i,j,k,l,m,n,o,p,q,r,s,t,u,v,w,x,y,z,A,B,C,D,E,F,G,H,I,J,K,L,M,N,O,P,Q,R,S,T,U,V,W,X,Y,Z}

\makeabbrev{\mathrm}{mrm#1}{a,b,c,d,e,f,g,h,i,j,k,l,m,n,o,p,q,r,s,t,u,v,w,x,y,z,A,B,C,D,E,F,G,H,I,J,K,L,M,N,O,P,Q,R,S,T,U,V,W,X,Y,Z}

\makeabbrev{\mathbf}{mbf#1}{a,b,c,d,e,f,g,h,i,j,k,l,m,n,o,p,q,r,s,t,u,v,w,x,y,z,A,B,C,D,E,F,G,H,I,J,K,L,M,N,O,P,Q,R,S,T,U,V,W,X,Y,Z}

\makeabbrev{\mathcal}{mc#1}{A,B,C,D,E,F,G,H,I,J,K,L,M,N,O,P,Q,R,S,T,U,V,W,X,Y,Z}

\makeabbrev{\mathbb}{mbb#1}{A,B,C,D,E,F,G,H,I,J,K,L,M,N,O,P,Q,R,S,T,U,V,W,X,Y,Z}

\makeabbrev{\mathscr}{ms#1}{A,B,C,D,E,F,G,H,I,J,K,L,M,N,O,P,Q,R,S,T,U,V,W,X,Y,Z}

\makeabbrev{\mathrm}{#1}{
Id,id,ran,rk,diag,stab,ann,conv,pr,ev,tr,End,Hom,sgn,im,op,can,fin,ext,red,tot,
rot,usc,lsc,Lip,LocLip,lip,bSymLip,osc,AC,loc,uloc,spec,coz,z,ul,
supp,Opt,Adm,Cpl,Geo,GeoSel,GeoOpt,GeoAdm,GeoCpl,reg,
bd,co,Ric,Exp,dExp,dist,seg,Seg,cut,fcut,Cut,SDiff,Iso,Isom,diam,cl,Homeo,Diff,Der,vol,dvol,inj,relint, Graph, sub,codim,
var,law,Var,Poi,Gam,pa,so,iso,fs,inv,pqi,mix,
TestF,
}

\makeabbrev{\mathsf}{#1}{DP,CD,BE,MCP,wMTW,MTW,RCD,ncRCD,QCD,EVI,Irr,IH,SC,wFe,VA,UP,Curv,Alex,CAT}

\newcommand{\dom}[1]{\msD(#1)}

\let\epsilon\varepsilon

\let\temp\phi
\let\phi\varphi
\let\varphi\temp

\newcommand{\diff}{\mathop{}\!\mathrm{d}}

\DeclareSymbolFont{symbolsC}{U}{pxsyc}{m}{n}
\SetSymbolFont{symbolsC}{bold}{U}{pxsyc}{bx}{n}
\DeclareFontSubstitution{U}{pxsyc}{m}{n}
\DeclareMathSymbol{\medcirc}{\mathbin}{symbolsC}{7}
\DeclareSymbolFont{symbolsZ}{OMS}{pxsy}{m}{n}
\SetSymbolFont{symbolsZ}{bold}{OMS}{pxsy}{bx}{n}
\DeclareFontSubstitution{OMS}{pxsy}{m}{n}

\newcommand{\N}{{\mathbb N}}
\newcommand{\R}{{\mathbb R}}
\newcommand{\C}{{\mathbb C}}

\allowdisplaybreaks

\usetikzlibrary{shapes.misc}
\tikzset{cross/.style={cross out, draw=black, minimum size=2*(#1-\pgflinewidth), inner sep=0pt, outer sep=0pt},
cross/.default={4pt}}

\newcommand{\comma}{\,\,\mathrm{,}\;\,}

\newcommand{\fstop}{\,\,\mathrm{.}}

\renewcommand{\iint}{\int\!\!\!\!\int}

\usepackage{scrextend}						

\newcommand{\D}{\mcD} 

\newcommand{\QP}{{\mu}}

\newcommand{\dUpsilon}{{\boldsymbol\Upsilon}}

\newcommand{\cquad}{\comma \quad}

\newcommand{\U}{\dUpsilon}
\newcommand{\E}{\mathcal E}

\renewcommand{\1}{\mathbf 1}

\newcommand{\sine}{\mathsf{sine}}

\numberwithin{equation}{section}
\theoremstyle{plain}	
\newtheorem{thm}{Theorem}[section]
\newtheorem*{thm*}{Theorem A}
\newtheorem*{thm**}{Theorem B}
\newtheorem*{mthm*}{Main Theorem}
\newtheorem{theorem}{Theorem}

\newtheorem{lemma}[thm]{Lemma}
\newtheorem*{cor*}{Corollary A}
\newtheorem*{cor**}{Corollary B}
\newtheorem*{cor***}{Corollary C}

\theoremstyle{definition}
\newtheorem*{defs*}{Definition}
\theoremstyle{remark}

\newtheorem{remark}[thm]{\bf Remark}

\newtheorem*{asm*}{{\bf List of Assumptions}}

\renewcommand{\paragraph}[1]{\medskip\emph{#1}.}
\renewcommand{\#}{\sharp}

\newcommand{\anti}{\mathrm{anti}}

\newcommand{\Tr}{\mathsf{Tr}}

\makeatletter
\@namedef{subjclassname@2020}{%
  \textup{2020} Mathematics Subject Classification}
\makeatother

\begin{document}
\title[Spectral gap for unlabelled Ginibre Interacting Brownian motion]{Spectral gap for unlabelled Ginibre Interacting Brownian motion}

\author[K.~Suzuki]{Kohei Suzuki}
\address{Department of Mathematical Science, Durham University, South Road, DH1 3LE, United Kingdom}
\email{kohei.suzuki@durham.ac.uk}
\thanks{\hspace{-5.5mm}  Department of Mathematical Science, Durham University, South Road, DH1 3LE, United Kingdom
\\
\hspace{2.0mm} E-mail: kohei.suzuki@durham.ac.uk
}

%

\renewcommand{\abstractname}{\normalsize  Abstract}
\begin{abstract} \normalsize
We determine the exact spectral gaps for both the finite-particle and infinite-particle unlabelled Ginibre interacting Brownian motions. In every finite-particle system, the spectral gap is equal to one, whereas for the infinite-particle system it is equal to two. The infinite-particle dynamics therefore has a strictly larger spectral gap than any of its finite-particle counterparts, revealing an enhancement of spectral gaps in the passage from finite to infinite particle systems.
\end{abstract}

\maketitle

\vspace{-7mm}
\section{Introduction}
The unlabelled Ginibre interacting Brownian motion is a system of interacting Brownian motions whose equilibrium measure is  the Ginibre point process.  The latter is one of the most fundamental determinantal point processes in random matrix theory: it describes the local bulk distribution of eigenvalues of random matrices with independent complex Gaussian entries and, at the same time, the Gibbs measure of a two-dimensional logarithmic Coulomb gas at inverse temperature~$\beta=2$.  Its associated diffusion provides a   model of two-dimensional Brownian particles interacting through a singular and long-range logarithmic repulsion.  It therefore lies at the intersection of random matrix theory, interacting particle systems, determinantal point processes, and  stochastic differential equations.

\smallskip
The main purpose of this paper is to determine the exact spectral gaps for both the finite-particle and infinite-particle unlabelled Ginibre interacting Brownian motions.  In particular, we prove  that the infinite-particle dynamics has a spectral gap and show that its exact value is~two.  From a dynamical viewpoint, this yields an exponential convergence to the equilibrium in $L^2(\mu)$ for the infinite-particle dynamics.

There are three aspects of the infinite-particle result that are worth emphasising. First, the infinite-particle system is translation-invariant. In many translation-invariant particle systems, long-wavelength density fluctuations produce arbitrarily slow modes and prevent the existence of a spectral gap; this is the case, for example, for the $\sine_2$ dynamics, namely the infinite-particle unlabelled Dyson Brownian motion at inverse temperature~$\beta=2$; see~\cite{Suz24}. Thus, the existence of the spectral gap is somewhat unexpected.  

Second,  the planar logarithmic interaction is not convex, so the result does not follow from the standard Bakry--\'Emery curvature method. The proof exploits the holomorphic structure of the Ginibre ensemble, together with a quantitative projection estimate showing that the holomorphic component of every fixed local observable vanishes in the infinite-particle limit.

Third, the spectral gap of the infinite-particle system is strictly larger than that of every finite-particle counterpart: the spectral gap of the finite-particle is exactly~one, independently of the number of particles, whereas the spectral gap of the infinite-particle  is~two. We refer to this phenomenon as {\it an enhancement of the spectral gap}.  As explained below, the phenomenon is caused by the disappearance, in the infinite-particle limit, of the global centre-of-mass mode that realises the exact spectral gap in the finite-particle case.

\subsection{Main result}
The {\it Ginibre point process} is the determinantal point process in the complex plane~$\C$ whose correlation kernel density  is given by
$$
  K(z,w)
  =\frac1\pi \exp\left\{-\frac{|z|^2}{2}-\frac{|w|^2}{2}+z\overline w\right\} \comma
$$
where $\overline w$ is the complex conjugate of $w$.
Let $\mu$ denote the law of the Ginibre point process, which is a probability measure on the configuration space over $\C$
$$\U:=\biggl\{\gamma =\sum_{i=1}^N \delta_{x_i}: N \in \N_0 \cup \{+\infty\},\ x_i \in \C,\ \gamma(K)<\infty \ \text{for all compact $K \subset \C$}\biggr\} \comma$$
where $\delta_x$ denotes the Dirac measure at $x$. 
We consider the pre-Dirichlet form associated with $\mu$ given as 
$$
  \mathcal E(F)
  =\frac12\int_{\U} \sum_{x\in \gamma} |\nabla_x F(\gamma)|^2\,\mu(\diff \gamma) \cquad \mathcal E(F, G)= \frac{1}{4}\Bigl( \E(F+G) - \E(F-G) \Bigr) \comma 
$$
defined for cylinder functions, i.e., functions $F: \U\to \R$ having the following forms:
$$
  F(\gamma)=\Theta\bigl(\langle f_1,\gamma\rangle,\ldots,
                   \langle f_m,\gamma\rangle\bigr) \cquad m \in \N \comma
$$
where $f_1,\ldots,f_m\in C_c^\infty(\C \to \R)$ are real-valued compactly supported smooth functions in~$\C$, $\langle f,\gamma\rangle = \int_\C f \diff \gamma$ and
$\Theta\in C_b^\infty(\R^m)$.  
Here
$\nabla_x$ is the gradient operator: for cylinder functions, it is  given as 
$$
  \nabla_xF(\gamma)
  =\sum_{i=1}^m
  (\partial_i\Theta)\bigl(\langle f_1,\gamma\rangle,\ldots,
                      \langle f_m,\gamma\rangle\bigr)
  \nabla f_i(x) \comma
$$
where $\nabla$ is the real two-dimensional gradient in the variable $x$.
By~\cite[Lemma 2.1, Theorem 2.3 and Corollary 2.3]{Osa13}, $\E$ is closable with respect to $\|\cdot\|_{L^2(\mu)}^2 + \E(\cdot)$ and the closure $(\E, \mathcal D)$ is a quasi-regular strongly local $\mu$-symmetric Dirichlet form.  Note that due to~\cite[Theorem~3.1]{OsaTan14b},  the Dirichlet form defined as the closure on cylinder functions is the same as the one defined on smooth local functions in the sense of \cite[p.5]{Osa13}.

By a fundamental theorem of the Dirichlet form theory (e.g., \cite[Theorem 1.5.2]{CheFuk11}),  $(\E, \mathcal D)$ therefore admits an associated
$\mu$-reversible diffusion process in $\U$ whose transition semigroup $u_t=T_tf$ acting on~$L^2(\U, \QP)$ is given as the unique solution to
$$\partial_t u_t = \mathcal A u_t \cquad u_0=f\comma$$
where $(\mathcal A, \dom{\mathcal A})$ is the infinitesimal generator, i.e., the unique self-adjoint operator satisfying $\E(F,G)=-\langle \mathcal AF, G \rangle_{L^2(\mu)} $ for $F \in \dom{\mathcal A}$ and $G \in \D$.
Let $(\Xi_t)_{t\geq0}$ denote the associated diffusion process in~$\U$,  called {\it infinite-dimensional unlabelled Ginibre interacting Brownian motion}. 
Writing a labelled version as
$$
  \Xi_t=\sum_{i=1}^{\infty}\delta_{X_t^i} \comma
$$
one obtains, for $\mu$-almost every initial configuration,
the infinite-dimensional stochastic differential equation
$$
  \diff X_t^i
  =
  \diff B_t^i
  +
  \lim_{r\to\infty}
  \sum_{\substack{j\neq i\\ |X_t^i-X_t^j|<r}}
  \frac{X_t^i-X_t^j}{|X_t^i-X_t^j|^2} \diff t
  \cquad i\in\mathbb N \comma
$$
where $\{B^i\}_{i\in\mathbb N}$ are independent standard Brownian
motions in $\mathbb R^2\simeq\mathbb C$. 
The limits are
defined through the logarithmic derivative of $\mu$ and
converge locally in $L^p$ with respect to the one-Campbell measure for
every $1\le p<2$. The existence of the associated labelled solution for $\mu$-a.e.~initial configuration
 has been proved in
\cite{Osa12} and \cite[Lemma~7.4 and Theorem~7.1]{OsaTan20}.

\smallskip
Our main result is to give the exact spectral gap of $(\E, \mathcal D)$. Let $\Var(F)=\int_\U F^2 \diff \mu - (\int_\U F \diff \mu)^2$.
\begin{theorem}[Spectral gap for infinite-system]
\label{thm:IG} 
$$
\inf_{\substack{F \in \mathcal D \\  \Var(F) > 0}} \frac{\E(F)}{\Var(F)} = 2 \fstop
$$
\end{theorem}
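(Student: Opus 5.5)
We approach the statement through the finite-particle approximations and the holomorphic structure of the Ginibre ensemble. For the $N$-particle Ginibre ensemble $\mu_N(\diff z)\propto \prod_{i<j}|z_i-z_j|^2 e^{-\sum|z_i|^2}\diff z$, the generator $L_N = \frac12\Delta - \sum_i z_i\cdot\nabla_i + \sum_{i\ne j}\frac{z_i-z_j}{|z_i-z_j|^2}\cdot\nabla_i$ is conjugate, via the ground state $|\Delta(z)|e^{-|z|^2/2}$, to a Hamiltonian built from $\partial_{z_i}$, $\partial_{\bar z_i}$. Because $\log|\Delta|^2$ is pluriharmonic, we split $L^2(\mu_N)$ (symmetric functions) into spaces spanned by $\Delta$-twisted polynomials in $z$ and $\bar z$. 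The plan is to compute eigenvalues explicitly: symmetric polynomials $p(z)$ that are holomorphic satisfy $L_N p = -\frac12\deg(p)\cdot 2\, p$ (up to the normalisation $\frac12$ in $\mathcal E$). The centre of mass $\sum_i z_i$ is then an eigenfunction with eigenvalue $1$, which gives the finite gap $1$. More generally, we expect the spectrum on the $(k,l)$ bidegree sector to be $k+l$ after Gram–Schmidt. We therefore first establish the finite-$N$ decomposition: $L^2(\mu_N)\ominus\mathbb{C}$ splits into a holomorphic/antiholomorphic part, with eigenvalues $\ge1$ realised by $\sum z_i$ and $\sum \bar z_i$, and a mixed part (bidegree $k,l\ge1$), with eigenvalues $\ge 2$.

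For the lower bound $\E(F)\ge 2\Var(F)$, take a cylinder $F$ and realise $\mu$ as the weak limit of $\mu_N$. Then $\E(F)=\lim \E_N(F)$ and $\Var(F)=\lim\Var_N(F)$ for fixed local $F$, by convergence of correlation functions and determinantal local integrability. Let $P_N$ denote the orthogonal projection in $L^2(\mu_N)$ onto the centred holomorphic plus antiholomorphic sector. The finite-$N$ spectral decomposition gives
\[
\E_N(F)\ \ge\ 2\Var_N(F) - \|P_NF\|^2_{L^2(\mu_N)} .
\]
The key step, and the main obstacle, is the quantitative projection estimate $\|P_N F\|_{L^2(\mu_N)}\to0$ for every fixed local observable $F$. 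Our first attempt is to expand onto the orthonormal basis of holomorphic symmetric functions, namely Schur-type functions normalised under $\mu_N$. A fixed local $F$ depends only on particles in a ball $B_R$, while the holomorphic modes of degree $k$ are spread over the disk of radius $\sqrt N$. We would bound $|\langle F,\,\cdot\,\rangle|$ using the determinantal structure: the conditional law of far particles given those in $B_R$, together with the normalisations $\|\sum z_i\|^2\sim N$ versus a local covariance that is $O(1)$ because Ginibre is hyperuniform. This should give $\|P_NF\|^2 = O(N^{-1})$ per degree. Summing over degrees requires uniform control, which we would get from the bound on the centred part of $F$ against $e_k$-type power sums $\sum_i z_i^k$ (norm $\sim \sqrt{N k!}$-like growth). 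Passing to the limit then yields $\E(F)\ge 2\Var(F)$ on cylinder functions, and by density on all of $\mathcal D$.

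For the upper bound, we exhibit test functions with ratio tending to $2$. We take linear statistics $F=\langle f,\gamma\rangle$ of a mixed type, for instance $f(z)=\phi(z/L)\,|z|^2$-like, or more systematically the $N$-particle eigenfunctions of bidegree $(1,1)$, such as the $\sum_i |z_i|^2 - N$ analogue, localised by a cutoff. Using the Ginibre variance formula $\Var\langle f,\gamma\rangle = \frac1{4\pi}\int|\nabla f|^2 + $ smaller terms and $\E(\langle f,\gamma\rangle)=\frac12\int |\nabla f|^2\frac{1}{\pi}\diff z$, the ratio is $2$ at leading order as the scale $L\to\infty$. Equivalently, any linear statistic of a slowly varying $f$ gives ratio $\to 2$, which shows that the infimum equals $2$.
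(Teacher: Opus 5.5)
Your architecture matches the paper's: finite-$N$ approximation, a gap-$2$ bound up to a holomorphic/antiholomorphic component, vanishing of that component for local observables, and dilated linear statistics for sharpness (the last part is essentially the paper's Step~4). Both ingredients of the lower bound, however, have genuine gaps.

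First, the finite-$N$ inequality rests on a false spectral picture. For holomorphic $p$ one computes $L_Np=-\sum_i z_i\partial_i p+\sum_{i<j}(\partial_ip-\partial_jp)/(\overline{z_i}-\overline{z_j})$. So for $N\ge2$ only polynomials in the centre of mass are eigenfunctions; for instance, for $N=2$ the symmetric function $\Re\,(z_1-z_2)^2$ has Rayleigh quotient $4/3$, not $2$. The holomorphic/antiholomorphic/mixed splitting is therefore not $L_N$-invariant, and sector-wise bounds say nothing about the cross terms $\mathcal E_N(P_NF,(I-P_N)F)$. Your inequality $\mathcal E_N(F)\ge 2\mathrm{Var}_N(F)-\|P_NF\|^2$ is nevertheless true, via the ground-state transform you mention and then abandon. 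For real $F$ and $u=(F-\int F\,\diff\mu_N)\Psi_N$ one has $\mathcal E_N(F)=2\sum_i\|\overline\partial_iu\|^2_{\mathcal L_N}\ge 2\|u-Pu\|^2_{\mathcal L_N}$, with $P$ the holomorphic projection in $\mathcal L_N$, because the Gaussian-weighted $\overline\partial$-Laplacian has spectrum $\{1,2,\dots\}$ on $\mathcal H_N^\perp$. Centred holomorphic and antiholomorphic functions are orthogonal in $L^2(\mu_N)$ by rotation invariance, so $\|P_NF\|^2=2\|Pu\|^2_{\mathcal L_N}$ for real $F$, and this is exactly your inequality.

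Second, and more seriously, the projection estimate carries the whole improvement from $1$ to $2$, yet it is left as a heuristic that does not close. Even granting $O(N^{-1})$ per degree, such bounds cannot be summed over unboundedly many degrees and Schur modes, and you give no mechanism for the required uniformity. The missing idea is to use fermionic annihilation operators $a_k$ for the unoccupied modes $k\ge N$:
\begin{enumerate}[(i)]
\item If $h\in\mathcal H_N^{\mathrm{anti}}$ and $h\perp\Psi_N$, the coefficient of $\varphi_{\{0,\dots,N-1\}}$ vanishes. Every remaining Slater determinant contains an index $k\ge N$, so $\|h\|^2\le\sum_{k\ge N}\|a_kh\|^2$.
\item The intertwining $a_kP=Pa_k$ (holomorphic projections in $\mathcal L_N$ and $\mathcal L_{N-1}$ respectively) lets you replace $Pu$ by $u$.
\item Since $a_k\Psi_N=0$ for $k\ge N$, the mean drops out. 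For $\eta=(z_1,\dots,z_{N-1})$ and $\gamma_\eta=\sum_i\delta_{z_i}$ this gives $(a_ku)(\eta)=\sqrt N\int\overline{\varphi_k(z)}\bigl(F(\gamma_\eta+\delta_z)-F(\gamma_\eta)\bigr)\Psi_N(z,\eta)\,m(\diff z)$, whose integrand vanishes for $z\notin B$ by locality.
\item A Hilbert--Schmidt bound on the tail projection composed with $\mathbf 1_B$ then yields $\|Pu\|^2_{\mathcal L_N}\le 4\|F\|_\infty^2\frac{|B|}{\pi}\varepsilon_N(B)$, where $\varepsilon_N(B)=\sum_{k\ge N}\int_B|\varphi_k(z)|^2\,m(\diff z)$ decays superexponentially.
\end{enumerate}
Without an argument of this kind, your lower bound $\mathcal E(F)\ge2\mathrm{Var}(F)$ remains unproved.
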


\subsection{Finite particle case}
Define the probability measure on $\C^N$ as 
$$
  \mu_N(\diff z_1\cdots \diff z_N)
  =\frac1{Z_N}|\Delta_N(z_1,\ldots,z_N)|^2 e^{-\Phi_N(z_1,\ldots,z_N)} \diff z_1\cdots \diff  z_N\comma
$$
where $\Delta_N(z_1,\ldots,z_N):=\prod_{1 \le i<j \le N}(z_i-z_j)$, $\Phi_N(z_1,\ldots,z_N)=\sum_{i=1}^N |z_i|^2$  and $Z_N$ is the normalising constant. Let $\U^{(N)}=\{\gamma \in \U: \gamma(\C)=N\}$ be the $N$-point configuration space and $\pi_N: \C^N \to \U^{(N)}$ be the symmetric quotient map~$(z_1, \ldots, z_N) \mapsto \pi_N(z_1, \ldots, z_N)=\sum_{i=1}^N \delta_{z_i}$ . The push-forward measure~$\mu^{[N]}=(\pi_N)_\# \mu_N$  is the law of the $N$-point determinantal point process in~$\C$ with correlation~kernel density
$$
  K_N(z,w)
  =\frac1\pi
  \exp\left\{-\frac{|z|^2}{2}-\frac{|w|^2}{2}\right\}
  \sum_{k=0}^{N-1}\frac{(z\overline w)^k}{k!} \fstop
$$
We consider the Dirichlet form
$$
  \mathcal E_N(F)
  =\frac12\int_{\C^N}\sum_{i=1}^N |\nabla_i F|^2 \diff \mu_N \comma
$$
where $\nabla_i$ is the real two-dimensional gradient in the variable $z_i$. We write $\mathcal D_N$ for the closure of the subspace $C_{c, \mathrm{sym}}^\infty(\C^N \to \R) \subset C_{c}^\infty(\C^N \to \R)$ of symmetric smooth functions with compact support with respect to $\|\cdot\|_{L^2(\mu_N)}^2 + \E_N(\cdot)$.
 Let $\Var_N(F)=\int_{\C^N} F^2 \diff \mu_N - (\int_{\C^N} F \diff \mu_N)^2$.
\begin{theorem}[Spectral gap for $N$-system] For every $N \ge 1$
\label{prop:FG}
$$
\inf_{\substack{F \in \mathcal D_N \\  \Var_N(F) > 0}} \frac{\E_N(F)}{\Var_N(F)} = 1 \fstop
$$
\end{theorem}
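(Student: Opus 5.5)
We prove the finite-$N$ statement (Theorem~\ref{prop:FG}) by the ground-state transformation, which turns the generator of $\E_N$ into a Landau Hamiltonian restricted to fermionic (antisymmetric) states.

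\emph{Step 1: ground-state unitary.} Let $\psi(z)=Z_N^{-1/2}\Delta_N(z)e^{-\Phi_N(z)/2}$, so that $\mu_N=|\psi|^2\diff z$. The map $U F=F\psi$ is unitary from the symmetric part of $L^2(\mu_N)$ onto the antisymmetric subspace of $L^2(\C^N,\diff z)$. The standard identity $\int|\nabla(F\psi)|^2=\int|\nabla F|^2|\psi|^2+\int F^2|\psi|^2 V$, with $V=\Delta\psi/\psi$ computed away from the collision set, conjugates the generator: $-\mathcal A_N=U^{-1}\bigl(-\tfrac12\Delta+\tfrac12 V\bigr)U$. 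Since $\Delta_N$ is holomorphic we have $\Delta\log|\Delta_N|=0$ off the diagonals, and $\partial\bar\partial$ of $\log\Delta_N+\overline{\log\Delta_N}$ has no cross term. A direct computation with $\Delta=4\partial\bar\partial$ then gives
\[
-\tfrac12\Delta+\tfrac12V \;=\; \sum_{i=1}^N\Bigl(-\tfrac12\Delta_i+\tfrac12|z_i|^2\Bigr)-E_0 \;=\; \sum_i 2\,\mathsf a_i^*\mathsf a_i \;+\; (\text{const}),
\]
where $\mathsf a_i$ are the magnetic ladder operators, i.e.\ the generator becomes $\sum_i H_i - E_0$. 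We must justify the delta contributions at the diagonals: $\psi$ vanishes there, and the diagonals have real codimension~2. We handle this by approximation, using that $C^\infty_{c,\mathrm{sym}}$ is a core, so that the quadratic forms agree.

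\emph{Step 2: diagonalise.} The single-particle operator $-\tfrac12\Delta+\tfrac12|z|^2$ on $\C\simeq\R^2$ is the 2D isotropic harmonic oscillator with eigenvalues $n_1+n_2+1$. Its eigenfunctions are $z^a\bar z^b e^{-|z|^2/2}$, up to lower-order terms, with energy $a+b+1$. Slater determinants of these functions give an orthonormal eigenbasis of the fermionic sector. The ground state fills the orbitals $z^0,\dots,z^{N-1}$ (Landau-level degeneracy occurs only in $a-b$, while the energy depends on $a+b$), and this yields $\psi$. Because the $N$-particle energies are sums of integers, the first excited level is exactly $E_0+1$. It is reached by raising one orbital by one unit, for example $z^{N-1}\to z^N$ (with $a+b$ increased by~1) or $z^k\to z^k\bar z$ type moves, and these states are antisymmetric and orthogonal to $\psi$. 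Hence $\mathrm{spec}(-\mathcal A_N)\subset\{0,1,2,\dots\}$, the eigenvalue $0$ is simple, and $1$ is an eigenvalue, so the gap equals~$1$.

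\emph{Step 3: explicit check and obstacle.} For a concrete minimiser we take $F=\mathrm{Re}\sum_i z_i$, the centre of mass. We have $\int F\diff\mu_N=0$ by rotation invariance, $\E_N(F)=N/2$, and the Gaussian factor gives $\Var_N(F)=N/2$ (the centre of mass of $\mu_N$ is Gaussian with variance $N/2$ per real coordinate). This already yields the upper bound $\le1$ without Step~2. The lower bound is the real content. The main obstacle is making Step~1 rigorous: identifying the closed form $(\E_N,\mathcal D_N)$ with the Friedrichs form of the fermionic oscillator, which requires showing that no extra boundary terms arise at the coincidence set. We plan to first prove the identity for $F\in C^\infty_{c,\mathrm{sym}}$ supported away from the diagonals, and then use capacity zero of the diagonals (codimension~2) to extend the identity to the whole core.
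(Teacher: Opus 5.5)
You have a genuine gap: the upper bound is fine, but the lower bound, which is the substance of the theorem, is not established.

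Your Step~3 (the observable $\Re\sum_i z_i$ with $\E_N=\Var_N=N/2$) is correct and is exactly the paper's optimality argument. The obstruction in Steps~1--2 is not the diagonal/capacity issue you single out.

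\textbf{Step~1 does not give an operator identity.} Because $\psi=Z_N^{-1/2}\Delta_N e^{-\Phi_N/2}$ is complex-valued, the identity $\E_N(F)=\langle F\psi,HF\psi\rangle$, with $H=-\tfrac12\Delta+\tfrac12V$ and $V=\Delta\psi/\psi=|z|^2-N(N+1)$, holds only for \emph{real-valued} $F$. The set $\{F\psi: F \text{ real symmetric}\}$ is merely a real subspace of the antisymmetric sector, and it is not invariant under $H$. For $F=a+ib$ the cross terms between $a\psi$ and $ib\psi$ do not cancel, so there is no identity $-\mathcal A_N=U^{-1}HU$. Off the diagonals, the actual complex-linear conjugate of $-\mathcal A_N$ under $F\mapsto F\psi$ is the Aharonov--Bohm-type operator $\tfrac12(-i\nabla-\nabla\arg\Delta_N)^2+\tfrac12\Delta|\psi|/|\psi|$. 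Its potential $|z|^2-N(N+1)+|\nabla\log|\Delta_N||^2$ carries a singular inverse-square term that your $V$ lacks.

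\textbf{Step~2 therefore diagonalises the wrong operator, and its conclusions are false.} The one-body energy of $z^a\overline z^b e^{-|z|^2/2}$ is $a+b+1$, so for $N\ge3$ the orbital $\overline z$ (energy $2$) undercuts $z^2$ (energy $3$). The Slater determinant of $e^{-|z|^2/2},\,z e^{-|z|^2/2},\,\overline z e^{-|z|^2/2}$ has $H$-energy $5-6=-1$, which is incompatible with $-\mathcal A_N\ge0$. For $N=2$, $\overline\psi\perp\psi$ is a second zero mode of $H$, so $0$ is not simple. If you instead meant the Landau operator $\sum_i 2\mathsf a_i^*\mathsf a_i$ with $\mathsf a_i=\overline\partial_i+z_i/2$, it differs from $\sum_i(-\tfrac12\Delta_i+\tfrac12|z_i|^2)-E_0$ by the non-constant total angular momentum. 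Its fermionic kernel (antisymmetric holomorphic functions times $e^{-|z|^2/2}$) is infinite-dimensional, so $0$ is not simple there either. In fact $\mathrm{spec}(-\mathcal A_N)\not\subset\{0,1,2,\dots\}$: for $N=2$, with $z_1-z_2=re^{i\phi}$, the symmetric function $r^{\sqrt5-1}\cos2\phi$ is an eigenfunction of $-\mathcal A_2$ with eigenvalue $\sqrt5-1$.

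\textbf{What is missing is a step that uses the reality of $F$; this is where the paper's proof lives.} For real $F$ your form $\langle G,HG\rangle$, with $G=F\psi$, equals $2\sum_i\|\overline\partial_i u\|^2_{\mathcal L_N}$ where $u=(F-\int F\diff\mu_N)\Psi_N$; the two pictures differ only by the factor $e^{-\Phi_N/2}$. The gap $1$ of the $\overline\partial$-Laplacian on $\mathcal H_N^\perp$ (Lemma~\ref{lem:FG}) then gives $\|u-P_Nu\|^2_{\mathcal L_N}\le\frac12\E_N(F)$. Since the kernel $\mathcal H_N^{\anti}$ is infinite-dimensional, no spectral computation alone can finish the argument. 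The paper's angle lemma (Lemma~\ref{lem:AG}) uses $Ju=u$ and $u\perp\Delta_N$, together with $J\mathcal H_{N,0}^{\anti}\perp\mathcal H_N^{\anti}$ (from rotation invariance), to get $\|P_Nu\|\le\|u-P_Nu\|$. In other words, the real subspace stays at angle at least $\pi/4$ from the holomorphic kernel, and combining the two estimates gives $\Var_N(F)\le\E_N(F)$. Your proposal has no substitute for this step.
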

We remark that Theorem~\ref{prop:FG} claims the uniform spectral gap only for symmetric functions~$\mathcal D_N$. For general non-symmetric functions, the spectral gap is indeed not uniformly bounded in $N$, see Remark~\ref{rem:SDM}. 

\subsection{Discussion}
Theorem~\ref{thm:IG}  is a striking contrast to the $1D$-logarithmic interacting case, i.e., {\it infinite-dimensional unlabelled Dyson Brownian motion}, where {\it the absence
of a spectral gap} has been proven in~\cite{Suz24} for the Dirichlet form whose reversible measure is the $\sine_2$ point process. 
This difference is already
visible in the long-wavelength behaviour of their structure factors.
For a stationary point process with structure factor $S(k)$, the
variance and energy of a centred linear statistic $F_f(\gamma)=\langle f, \gamma \rangle - \rho \int f \diff x$ ($\rho$ is the intensity constant and $f$ is smooth and compactly supported) satisfy,
up to the common intensity and Fourier-normalization constants,
$$
  \Var(F_f)
  \simeq
  \int S(k)|\widehat f(k)|^2 \diff k \comma
  \qquad
  \E(F_f)
  \simeq
  \frac12\int |k|^2|\widehat f(k)|^2 \diff k \fstop
$$
Thus a density mode concentrated near frequency $k$ has Rayleigh
quotient~$\E(F_f)/\Var(F_f)$ approximately $|k|^2/(2S(k))$.  For Ginibre,
$$
  S_{\rm Gin}(k)
  =
  1-e^{-|k|^2/4}
  \sim \frac{|k|^2}{4} \comma
$$
hence this quotient converges to $2$ as $k\to0$.  Long-wavelength
density fluctuations therefore do not become arbitrarily slow.  By
contrast, 
$$S_{\sine_2}(k)\sim |k|/(2\pi)\comma$$ so the corresponding
Rayleigh quotient is asymptotic to $\pi|k|$ and tends to zero as $k \to 0$.  This explains the absence of a spectral gap for
$\sine_2$.  
This structure factor calculation however only concerns linear
statistics, which is not enough for proving the existence of spectral gap for the entire domain~$\mathcal D$ in the Ginibre case. In the proof of the current paper, a gap estimate for $\overline{\partial}$-Laplacian (Lemma~\ref{lem:FG}) and a high-dimensional projection estimate (Lemma~\ref{lem:LHV}) for holomorphic functions with Gaussian weight play key roles.

\smallskip
The enhancement of the spectral gaps in the passage from finite-particle to infinite-particle systems in Theorems~\ref{thm:IG} and \ref{prop:FG} has the following mechanism: in the finite particle case, the following function attains the infimum of the Rayleigh quotient:
$$
  G_N(z_1,\ldots,z_N)
  =
  \Re\left(\sum_{i=1}^N z_i\right)\comma
$$
where $\Re$ is the real part of complex numbers. 
This attains the exact spectral gap, which is equal to one. 
In the infinite-particle case, however, there is no $L^2(\mu)$ infinite-particle counterpart of the function $G_N$ belonging to the domain~$\D$. Furthermore, any function defined only on  finite-particle configuration spaces  cannot contribute to the infinite-particle spectral gap  because $\mu(\U^{(N)})=0$ for every $N \ge 0$.
By
contrast, Lemma~\ref{lem:LHV} shows that the
holomorphic component of every fixed bounded local observable
vanishes in the infinite-particle limit.  This is why the infinite-particle spectral gap
improves from one to two.

The mechanism revealed here suggests a more general phenomenon: the lowest finite-particle eigenmodes may be carried by global degrees of freedom, such as the centre of mass, boundary motion, or other collective coordinates, which have no nontrivial counterpart in the infinite-particle case.  Such modes can therefore disappear under the infinite-particle limit, while the remaining spectrum reflects the intrinsic relaxation of bulk fluctuations and may start at a strictly higher value.  
It would be, therefore, interesting to understand this as a general principle for interacting particle systems  beyond the Ginibre model, and to identify structural conditions under which this type of enhancement of spectral gaps occurs. 

\smallskip
For the finite-particle systems, spectral gap inequalities for finite planar Coulomb gases were established by~\cite{bolley2018dynamics}, with constants that may depend on the number of particles. In the determinantal Ginibre case, Theorem~\ref{prop:FG} identifies the optimal constant on the unlabelled sector (i.e., for symmetric functions) and shows its independence of $N$. 

\subsection{Proof outline}
The proofs of Theorems~\ref{thm:IG} and~\ref{prop:FG} have a common mechanism. The key idea is to decompose ground-state transformed functions into the holomorphic part and its orthogonal complement. 
For $F \in \D_N$, 
set~the ground-state transformed function~$v_N:=(F-\int F \diff \mu_N)\Delta_N/\sqrt{Z_N}.$
Let~
$$\mathcal L_N=L^2(\C^N \to \C,e^{-\Phi_N}\diff z^{\otimes N})$$ and $P_N: \mathcal L_N \to \mathcal H_N$ be the orthogonal projection onto~$\mathcal H_N \subset \mathcal L_N$ the subspace of holomorphic functions. Then
\begin{equation}
 \Var_{N}(F) = \|v_N\|_{\mathcal L_N}^2
  =\|v_N-P_Nv_N\|_{\mathcal L_N}^2 +  \|P_Nv_N\|_{\mathcal L_N}^2 \le  \frac12\E_N(F) +  \|P_Nv_N\|_{\mathcal L_N}^2 \fstop
  \label{eqI:AL} 
\end{equation}
The inequality~
\begin{align} 
\|v_N-P_Nv_N\|_{\mathcal L_N}^2 \le  \frac12\E_N(F)
\label{e:OPE}
\end{align}
 follows by the sharp $L^2$-spectral gap estimate of the Gaussian weighted $\overline{\partial}$-Laplacian~$\Box_N$ on the orthogonal complement $\mathcal H_N^{\perp}$ in~$\mathcal L_N$~(Lemma~\ref{lem:FG}).

\smallskip
The proofs of Theorems~\ref{thm:IG} and~\ref{prop:FG} however deviate in the estimate of the holomorphic projection~$\|P_Nv_N\|_{\mathcal L_N}^2$ in~\eqref{eqI:AL}. In Theorem~\ref{prop:FG}, the spectral gap is considered in $\mathcal D_N$, for which the angle estimate (Lemma~\ref{lem:AG}) provides
$$\|P_Nv_N\|_{\mathcal L_N}^2 \le  \|v_N-P_Nv_N\|_{\mathcal L_N}^2.$$
Plugging this into~\eqref{eqI:AL}  together with~\eqref{e:OPE},  
we obtain~
$$\Var_{N}(F)  \le \E_N(F)\fstop$$
The equality is achieved by~$F(z_1,\ldots,z_N)=\Re(\sum_{i=1}^N z_i)$ as mentioned above. 

\smallskip
In~Theorem~\ref{thm:IG}, we instead take a cylinder function $F$. The significant property is that it is a local function, i.e., there is a compact set $B \subset \C$ such that $F(\gamma)=F(\gamma|_{B})$ for every~$\gamma \in \U$. 
We then prove in Lemma~\ref{lem:LHV} that the holomorphic projection decays to zero as the number of particles $N$ tends to infinity:
\begin{equation}\label{eqI:HEB}
  \|P_Nv_N\|_{\mathcal L_N}^2
  \le
  4\|F\|_\infty^2
  \frac{|B|}{\pi}\,
  \varepsilon_N(B) \xrightarrow{N \to \infty}0 \fstop
\end{equation}
Here $\varepsilon_N(B)$ measures the total mass inside~$B$ carried by the holomorphic modes with indices at least~$N$, and it tends to zero for every fixed compact set~$B$ as~$N$ tends to infinity.
By the weak convergence $\mu^{[N]} \to \mu$, we have $\Var^{[N]}(F) \to \Var(F)$ and $\E^{[N]}(F) \to \E(F)$, where $\Var^{[N]}$ and $\E^{[N]}$ are the variance and the Dirichlet form associated with $\mu^{[N]}$. Estimates~\eqref{eqI:AL} and~\eqref{eqI:HEB} then conclude
$$\Var(F) \le \frac12\E(F) \fstop$$
The optimality of this constant is given by the family of linear statistics $F_{f_R}$ with dilated functions~$f_R(z)=f(z/R)$ for $f \in C_c^\infty(\C)$, with which the Rayleigh quotient converges to $2$.

\section{Proof of Theorem~\ref{prop:FG}}

\subsection{Gaussian weighted $L^2$-spaces and holomorphic projections}
Let $\mathcal L_N=L^2(\C^N \to \C,e^{-\Phi_N}\diff z^{\otimes N})$ denote the space of $e^{-\Phi_N}\diff z^{\otimes N}$-equivalence classes of functions~$u: \C^N \to \C$ with 
$$\|u\|_{\mathcal L_N}^2=\int_{\C^N}|u(z)|^2 e^{-\Phi_N(z)}\diff z^{\otimes N}<+\infty \fstop$$ 
Define the inner product
$$\langle u, v\rangle _{\mathcal L_N}:=\int_{\C^N}u(z) \overline{v(z)} e^{-\Phi_N(z)}\diff z^{\otimes N}\fstop$$
Let~$\mathcal H_N \subset \mathcal L_N$ be the subspace of entire holomorphic functions and 
$$\Psi_N:=\frac{1}{\sqrt{Z_N}} \Delta_N \fstop$$
For symmetric $F$, 
set~$u=(F-\int F\diff \mu_N)\Psi_N.$
Then $u$ is antisymmetric and
\begin{equation}
\Var_{N}(F)=\|u\|_{\mathcal L_N}^2.
  \label{eq:VT}
\end{equation}
Since $\Delta_N$ is holomorphic,
$$
  \overline\partial_i u=(\overline\partial_i F)\Psi_N \comma
  \qquad
  \overline\partial_i=\frac12\left(\frac{\partial}{\partial x_i}
       +i\frac{\partial}{\partial y_i}\right) \comma
  \qquad z_i=x_i+iy_i \fstop
$$
For real-valued $F$, we have $|\overline\partial_i F|^2=\frac14|\nabla_i F|^2.$
Consequently
\begin{equation}
  q_N(u):=\sum_{i=1}^N\|\overline\partial_i u\|_{\mathcal L_N}^2
  =\frac{1}{4}\int\sum_{i=1}^N |\nabla_iF|^2 \diff \mu_N  = \frac12 \E_N(F)\fstop
  \label{eq:DET}
\end{equation}

\begin{lemma}
\label{lem:FG}
Let $P_N:\mathcal L_N\to\mathcal H_N$ be the orthogonal projection.  Then, 
\begin{equation}
  \|u-P_Nu\|_{\mathcal L_N}^2
  \le
  \sum_{i=1}^N\|\overline\partial_i u\|_{\mathcal L_N}^2 
  \label{eq:FG}
\end{equation}
for every $u \in D(q_N):=\{u \in \mathcal L_N: \overline\partial_i u \in \mathcal L_N,\ 1 \le i \le N\}$, where $\overline\partial_i$ is understood as the distributional derivative.
\end{lemma}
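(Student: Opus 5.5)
We plan to prove \eqref{eq:FG} by reducing it to a one-variable estimate via the Bargmann--Fock (Hermite) orthonormal basis of $\mathcal L_N$. The cleanest route is spectral: introduce the weighted adjoint $\overline\partial_i^{*}=-\partial_i+\overline z_i$ in $\mathcal L_N$ (here $\partial_i=\frac12(\partial_{x_i}-i\partial_{y_i})$). Integration by parts against $e^{-|z_i|^2}$ gives this formula, since $\overline\partial_i e^{-|z_i|^2}=-z_i e^{-|z_i|^2}$ and the adjoint picks up the conjugate. We then compute the commutator $[\overline\partial_i,\overline\partial_i^{*}]=\overline\partial_i(\overline z_i\,\cdot)-\overline z_i\overline\partial_i=1$. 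The relations $[\overline\partial_i,\overline\partial_j^*]=0$ for $i\ne j$ are immediate. So $a_i:=\overline\partial_i$ and $a_i^*$ form $N$ independent canonical annihilation/creation pairs.

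The key steps, in order, are as follows.

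\textbf{Step 1.} Work with polynomials in $z,\overline z$, which are dense in $\mathcal L_N$. We verify that they are a core for $q_N$, i.e.\ dense in $D(q_N)$ for the graph norm $\|u\|^2+\sum_i\|\overline\partial_i u\|^2$. We would do this by the standard cutoff-plus-mollification argument. Alternatively, we would identify $q_N$ as the closed form of the self-adjoint operator $\Box_N=\sum_i\overline\partial_i^*\overline\partial_i$ and work directly with its spectral decomposition.

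\textbf{Step 2.} Produce an orthonormal basis adapted to the pairs $(a_i,a_i^*)$. The kernel of all $a_i$ is exactly $\mathcal H_N$, by Weyl's lemma applied to $\overline\partial$. Applying products $(a^*)^{\beta}=\prod_i(a_i^*)^{\beta_i}$ to the monomials $z^\alpha/\sqrt{\pi^N\alpha!}$ yields the complex Hermite polynomials $H_{\alpha,\beta}$. These form an orthogonal basis of $\mathcal L_N$; normalised, they satisfy
\[
\Box_N H_{\alpha,\beta}=|\beta|\,H_{\alpha,\beta}\fstop
\]
This follows from the commutation relations by the usual ladder computation, $a_i^*a_i (a_i^*)^{\beta_i}h=\beta_i (a_i^*)^{\beta_i}h$ for holomorphic $h$. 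Completeness follows because the $H_{\alpha,\beta}$ span all polynomials in $z,\overline z$: the leading term of $H_{\alpha,\beta}$ is $z^\alpha\overline z^\beta$, and we argue by induction on degree.

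\textbf{Step 3.} Conclude. Expand $u=\sum c_{\alpha\beta}H_{\alpha,\beta}$, so that $P_Nu=\sum_{\beta=0}c_{\alpha 0}H_{\alpha,0}$. Then
\[
\|u-P_Nu\|^2=\sum_{|\beta|\ge1}|c_{\alpha\beta}|^2\le\sum|\beta||c_{\alpha\beta}|^2=q_N(u)\fstop
\]
This gives \eqref{eq:FG}, with equality exactly on the span of $|\beta|=1$ modes such as $\overline z_i h(z)$.

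The main obstacle is the functional-analytic identification in Steps 1--2. One must check that the form $q_N$ on the maximal distributional domain $D(q_N)$ coincides with the closure of its restriction to polynomials, so that $q_N(u)=\sum|\beta||c_{\alpha\beta}|^2$ holds for every $u\in D(q_N)$ and not merely on a core. We would handle this by showing that, for $u\in D(q_N)$, the coefficients of $\overline\partial_i u$ in the basis are $\sqrt{\beta_i}\,c_{\alpha,\beta}$ with $\beta_i$ lowered by one. This is computed by pairing $\overline\partial_i u$ against $H_{\alpha,\beta-e_i}$ and integrating by parts in the distributional sense, which is legitimate since $H_{\alpha,\beta}e^{-|z|^2}$ is Schwartz. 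Parseval then gives the identity directly, without needing density.
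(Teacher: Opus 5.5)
Your proposal is correct and is essentially the paper's argument. The paper also uses $\overline\partial_i^{*}=-\partial_i+\overline z_i$ and the canonical commutation relations to get $\Box_N\ge I$ on $\mathcal H_N^\perp$, but it cites the spectrum $\{1,2,\dots\}$ rather than constructing the complex Hermite basis $H_{\alpha,\beta}$, and it passes from the operator domain of $\Box_N$ to $D(q_N)$ by density, where you instead use the coefficient identity $\langle\overline\partial_i u,H_{\alpha,\beta-e_i}\rangle=\sqrt{\beta_i}\,c_{\alpha\beta}$ and Parseval. One small point: that integration by parts is not justified by $H_{\alpha,\beta}e^{-|z|^2}$ being Schwartz, since $u\in\mathcal L_N$ need not be tempered; it is justified by a cutoff $\chi(z/R)$ together with Cauchy--Schwarz in $\mathcal L_N$, using $u,\overline\partial_i u\in\mathcal L_N$.
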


\begin{proof}
The adjoint of $\overline\partial_i$ in the weighted $L^2$-space~
$\mathcal L_N=L^2(e^{-\Phi_N}\diff z^{\otimes N})$ is 
$$
  \overline\partial_i^{\,*}=-\partial_i+\overline z_i \comma
  \qquad
  \partial_i=\frac12\left(\frac{\partial}{\partial x_i}
       -i\frac{\partial}{\partial y_i}\right) \fstop
$$
The scalar $\overline\partial$-Laplacian is then defined on $C_c^\infty(\C^N \to \C)$ by 
$$
  \Box_N=\sum_{i=1}^N \overline\partial_i^{\,*}\overline\partial_i \fstop
$$
We use the same symbol~$\Box_N$ for the self-adjoint extension in $\mathcal L_N$ associated with the quadratic form~$(q_N, D(q_N))$ in~\eqref{eq:DET}.
 Noting that $\ker{\Box_N}=\mathcal H_N$, 
 the canonical commutation relations
for the Gaussian creation and annihilation operators show that the spectrum of
$\Box_N$ on the orthogonal complement~$\mathcal H_N^\perp$ is $\{1,2,3,\ldots\}$, see, e.g., ~\cite[Theorems 2.3, 2.4]{Has2013}.  Thus, 
$$
  \Box_N\ge I
  \quad\hbox{on} \quad \mathcal H_N^\perp \fstop
$$
Therefore, using the holomorphicity $\overline{\partial}_i P_Nu=0$ for $1 \le i \le N$, 
\begin{equation}
  \|u-P_Nu\|^2_{\mathcal L_N}
  \le \langle \Box_N(u-P_Nu),u-P_Nu\rangle_{\mathcal L_N} = \sum_{i=1}^N \langle \overline{\partial}_i(u-P_Nu),\overline{\partial}_i(u-P_Nu)\rangle_{\mathcal L_N}
  =\sum_{i=1}^N\|\overline\partial_i u\|^2_{\mathcal L_N} \comma
\end{equation}
for every~$u$ in the domain of the operator $\Box_N$. By density,  this extends to $D(q_N)$, which is~\eqref{eq:FG}. 
\end{proof}

\subsection{Angle lemma}
Let $\mathcal H_N^{\anti} \subset \mathcal H_N$ be the subspace of $\mathcal H_N$ consisting of antisymmetric functions.
Every element of $\mathcal H_N^{\anti}$ has the form $h=p\Delta_N$, 
where $p$ is holomorphic and symmetric.  Let
$$
  \Omega_N=\C\Delta_N :=\{c\Delta_N: c \in \C\} \comma
  \qquad
  \mathcal H_{N,0}^{\anti}=\mathcal H_N^{\anti}\ominus \Omega_N \fstop
$$
On the closed subspace $\Delta_N L^2_{\mathrm{sym}}(\mu_N):=\{\Delta_N u: u \in L^2_{\mathrm{sym}}(\mu_N)\}$, define the involution
$$
  J(\Delta_N u)=\Delta_N \overline{u} \fstop
$$
Thus, if $u=F\Delta_N$ with $F$ real-valued, then $Ju=u$.

\begin{lemma}
\label{lem:AG}
Let $F \in L^2(\mu_N)$ be real-valued and symmetric, and set
$u=(F-\int F \diff \mu_N)\Psi_N$.  Then
\begin{equation}
  \|u\|_{\mathcal L_N}^2
  \le
  2\|u-P_Nu\|^2_{\mathcal L_N} \fstop
  \label{eq:AL}
\end{equation}
\end{lemma}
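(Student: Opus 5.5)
The plan is to show $\|P_Nu\|_{\mathcal L_N}\le\|u-P_Nu\|_{\mathcal L_N}$. By Pythagoras, $\|u\|^2=\|P_Nu\|^2+\|u-P_Nu\|^2$, so this gives \eqref{eq:AL}. The tool is the involution $J$: it fixes $u$, and it sends the relevant holomorphic antisymmetric functions into $\mathcal H_N^\perp$.

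\emph{Step 1: locating $h:=P_Nu$.} The Gaussian weight $e^{-\Phi_N}$ is permutation invariant. So permutations of variables act unitarily on $\mathcal L_N$ and preserve $\mathcal H_N$, hence commute with $P_N$. Since $u$ is antisymmetric, $h\in\mathcal H_N^{\anti}$. Moreover
\[
\langle u,\Delta_N\rangle_{\mathcal L_N}=\sqrt{Z_N}\int \Bigl(F-\int F\diff\mu_N\Bigr)\diff\mu_N=0 .
\]
Since $\Delta_N\in\mathcal H_N$, this gives $\langle h,\Delta_N\rangle=0$, i.e.\ $h\in\mathcal H^{\anti}_{N,0}$.

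\emph{Step 2: properties of $J$.} Since $|\Delta_N\overline v|=|\Delta_N v|$, $J$ is a conjugate-linear isometry on $\Delta_NL^2_{\mathrm{sym}}(\mu_N)$. Hence $\langle Ja,Jb\rangle=\overline{\langle a,b\rangle}$. As $F$ is real, $Ju=u$. The key claim is
\[
\langle Jh,g\rangle_{\mathcal L_N}=0 \quad\text{for all } h\in\mathcal H^{\anti}_{N,0},\ g\in\mathcal H^{\anti}_N .
\]
Write $h=p\Delta_N$ and $g=q\Delta_N$ with $p,q$ symmetric holomorphic. Then
\[
\langle Jh,g\rangle=\overline{\int pq\,|\Delta_N|^2e^{-\Phi_N}\diff z^{\otimes N}}=\overline{\langle pq\Delta_N,\Delta_N\rangle}.
\]
The integrand is integrable by Cauchy--Schwarz, since $p\Delta_N,q\Delta_N\in\mathcal L_N$. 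Now apply the diagonal rotation $z\mapsto e^{i\theta}z$, under which $|\Delta_N|^2e^{-\Phi_N}$ is invariant, and average in $\theta$. Using the homogeneous expansion $pq=\sum_k (pq)_k$, only the degree-zero part survives, so the integral equals $p(0)q(0)\|\Delta_N\|^2$. The same argument applied to $\langle h,\Delta_N\rangle=0$ gives $p(0)=0$, so the integral vanishes. In particular $Jh\perp\mathcal H_N^{\anti}$.

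\emph{Step 3: conclusion.} Put $w=u-h\perp\mathcal H_N$. Using $Ju=u$ and Step 2,
\[
\|h\|^2=\langle u,h\rangle=\overline{\langle Ju,Jh\rangle}=\overline{\langle u,Jh\rangle}=\overline{\langle h,Jh\rangle}+\overline{\langle w,Jh\rangle}=\overline{\langle w,Jh\rangle}.
\]
Cauchy--Schwarz then gives $\|h\|^2\le\|w\|\,\|Jh\|=\|w\|\,\|h\|$. Hence $\|h\|\le\|w\|$, which yields \eqref{eq:AL}.

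\emph{Expected main obstacle.} The delicate point is Step 2: justifying the rotation-averaging and the degree-zero extraction rigorously for general $p,q$ with $p\Delta_N,q\Delta_N\in\mathcal L_N$. One route is to expand in the orthogonal monomial basis of $\mathcal H_N$. Another is to apply the mean-value property to the holomorphic function $\theta\mapsto (pq)(e^{i\theta}z)$, using Fubini, which is justified by the integrability above.
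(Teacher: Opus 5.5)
Your proof is correct and follows essentially the paper's route: the same involution $J$ with $Ju=u$, the same orthogonality $J\mathcal H^{\anti}_{N,0}\perp\mathcal H^{\anti}_N$ derived from the rotation invariance of $|\Delta_N|^2e^{-\Phi_N}$, and the same conclusion $\|P_Nu\|\le\|u-P_Nu\|$. The differences are minor: you finish with the identity $\|h\|^2=\overline{\langle w,Jh\rangle}$ and Cauchy--Schwarz, where the paper shows $P_NJh=0$ and hence $h=P_NJr$; and your mean-value argument treats general $p,q$ directly, where the paper checks polynomials and then uses density.
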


\begin{proof}
Define $h=P_Nu$ and $r=u-h$. Note that the permutation operator $U_\sigma: \mathcal L_N \to \mathcal L_N$ defined as $(U_\sigma u)(z_1, \ldots, z_N):=u(z_{\sigma(1)},\ldots, z_{\sigma(N)})$ is the unitary operator for every permutation $\sigma$ since the Gaussian weight~$e^{-\Phi_N}$ is permutation-invariant. Thus, $P_N \circ U_\sigma=U_\sigma\circ P_N$.  Together with that $u$ is antisymmetric and $U_\sigma$ preserves the holomorphicity, we thus obtain
$h\in\mathcal H_N^{\anti}$.  
Thus
$$
  0=\int \Bigl(F-\int F \diff \mu_N\Bigr)\diff \mu_N
  =\int \Bigl(F-\int F \diff \mu_N\Bigr) |\Psi_N|^2e^{-\Phi_N}\diff z^{\otimes N}
  =\frac{1}{\sqrt{Z_N}}\langle u,\Delta_N\rangle_{\mathcal L_N} \fstop
$$
Noticing $\langle u,\Delta_N\rangle_{\mathcal L_N} = \langle r+h,\Delta_N\rangle_{\mathcal L_N} = \langle h,\Delta_N\rangle_{\mathcal L_N}$, we have that $h=P_Nu$ is also orthogonal to $\Delta_N$, hence
$h\in\mathcal H_{N,0}^{\anti}$.

\smallskip
We next show the orthogonality
\begin{equation}
  J\mathcal H_{N,0}^{\anti}\perp \mathcal H_N^{\anti} \fstop
  \label{eq:JO}
\end{equation}
It is enough to check this on holomorphic polynomials and then use density.
Take
$$
  h_1=p\Delta_N\in\mathcal H_{N,0}^{\anti}\comma
  \qquad
  h_2=q\Delta_N\in\mathcal H_N^{\anti}\comma
$$
with $p,q$ symmetric holomorphic polynomials.  Since $\mathcal H_{N,0}^{\anti} \perp \Omega_N$ by definition, 
$h_1\perp\Delta_N$, which implies that the constant term of the polynomial~$p$ is zero:
 indeed, decompose~$p=\sum_{m=0}^\infty p_m$, where $p_m$ is homogeneous part of degree $m$. Then, $p_m(e^{i\theta}z_1, \ldots, e^{i\theta}z_N)=e^{im\theta}p_m(z_1, \ldots, z_N)$. Noting that the measure $|\Delta_N|^2e^{-\Phi_N}\diff z^{\otimes N}$ is invariant under the push-forward by the rotation~$(z_1, \ldots, z_N) \mapsto (e^{i\theta}z_1, \ldots, e^{i\theta}z_N)$, the integral $I_m=\int_{\C^N} p_m|\Delta_N|^2e^{-\Phi_N}\diff z^{\otimes N}$ satisfies the following property for every~$\theta$:
$$
 I_m=e^{im\theta}I_m \fstop
$$
Thus, we have $I_m=0$ for $m \ge 1$, and only the term $I_0$ can contribute. By the orthogonality~$h_1\perp\Delta_N$, 
$$0=\langle h_1, \Delta_N \rangle_{\mathcal L_N} = \langle p \Delta_N, \Delta_N \rangle_{\mathcal L_N}=I_0=Z_Np_0 \fstop$$

With our convention of the inner-product, 
$$
  \langle Jh_1,h_2\rangle_{\mathcal L_N}
  =\int_{\C^N}\overline{p(z)q(z)}\,|\Delta_N(z)|^2e^{-\Phi_N(z)}\diff z^{\otimes N} \fstop
$$
Every homogeneous component
of $p q$ has strictly positive total degree, because $p$ has no constant term.
Using the same argument as proving $I_m=0$ for $m \ge 1$ therefore gives $\langle Jh_1,h_2\rangle_{\mathcal L_N}=0$.  This
proves \eqref{eq:JO}.

\smallskip
We now use $Ju=u$ thanks to that $F$ is real-valued.  Since $u=h+r$,
$$
  h+r=Ju=Jh+Jr \fstop
$$
As we saw in the first paragraph, using $P_N \circ U_\sigma=U_\sigma\circ P_N$, 
the projection $P_N$ preserves antisymmetry.  Thus, recalling that $Jh$ is antisymmetric, $P_NJh \in \mathcal H_N^{\anti}$.
Due to~\eqref{eq:JO}, $Jh \perp  \mathcal H_N^{\anti}$, thus, for every $g \in \mathcal H_N^{\anti}$, the orthogonality of $(I-P_N)$ to $\mathcal H_N$ implies
$$\langle P_N J h, g \rangle_{\mathcal L_N}=\langle P_N J h, g \rangle_{\mathcal L_N}+\langle(I-P_N) J h, g \rangle_{\mathcal L_N} = \langle J h, g \rangle_{\mathcal L_N} =0 \fstop$$
Thus, $P_NJh \perp \mathcal H_N^{\anti}$. Since $P_NJh \in \mathcal H_N^{\anti}$, we obtain
$$
  P_NJh=0 \fstop
$$
Projecting $h+r=Jh+Jr$ onto $\mathcal H_N$ gives
$$
  h=P_NJr \fstop
$$
Hence, using that $P_N$ is a contraction and $J$ is  norm-preserving,
$$
  \|h\|_{\mathcal L_N} \le \|Jr\|_{\mathcal L_N} =\|r\|_{\mathcal L_N}  \fstop
$$
Since $r=u-P_Nu$ is orthogonal to $h=P_Nu$,
$$
  \|u\|^2_{\mathcal L_N} =\|h\|^2_{\mathcal L_N} +\|r\|^2_{\mathcal L_N} \le 2\|r\|^2_{\mathcal L_N}  \fstop
$$
This is \eqref{eq:AL}.
\end{proof}

\subsection{Proof of Theorem~\ref{prop:FG}}
Combining Lemmas \ref{lem:FG} and \ref{lem:AG},
$$
  \|u\|_{\mathcal L_N}^2
  \le
  2\sum_{i=1}^N\|\overline\partial_i u\|_{\mathcal L_N}^2 \fstop
$$
Using \eqref{eq:VT} and \eqref{eq:DET},
$$
  \Var_{N}(F)
  \le
  2\cdot\frac{1}{4}
  \int\sum_{i=1}^N |\nabla_iF|^2 \diff \mu_N 
  =\mathcal E_N(F) \fstop
$$
The extension to the closure~$\mathcal D_N$ is standard by density.

\paragraph{\bf Optimality} 
Set
$G_N(z_1,\ldots,z_N)
  =
  \Re\left(\sum_{i=1}^N z_i\right).$
Writing~$\mssm=\frac1N\sum_{i=1}^N z_i$ with $y_i=z_i-\mssm$,
we have
$$  \sum_{i=1}^N y_i=0 \comma
  \qquad
  \Delta_N(z_1,\ldots,z_N)
  =
  \Delta_N(y_1,\ldots,y_N) \comma
$$
and
$$
  \sum_{i=1}^N|z_i|^2
  =
  \sum_{i=1}^N|y_i|^2+N|\mssm|^2 \fstop
$$
Thus the centre-of-mass variable $\mssm \in \C$ is independent of the relative
coordinates and has density
$$
  \frac{N}{\pi}e^{-N|\mssm|^2} \diff \mssm \comma
$$
namely,
$$
  \int_{\C^N} f(\mssm(z)) \diff \mu_N(z) =  \int_{\C} f(z)  \frac{N}{\pi}e^{-N|z|^2} \diff z \comma
$$
 for every $f: \C \to \R$ whenever the RHS makes sense. 
Hence
$$
  \Var_{N}(G_N)
  =
  N^2\Var_{N}(\Re \mssm) = N^2 \int_{\R}x^2\sqrt{\frac N\pi}e^{-Nx^2} \diff x
  =
  \frac N2 \fstop
$$
On the other hand, by the standard cut-off argument, $G_N \in \mathcal D_N$, and 
$$
  |\nabla_iG_N|^2=1
  \qquad (1\le i\le N)\comma
$$
so that
$$
  \mathcal E_N(G_N)
  =
  \frac12\int_{\C^N}
  \sum_{i=1}^N|\nabla_iG_N|^2 \diff\mu_N
  =
  \frac N2 \fstop
$$
Therefore
$$
  \frac{\mathcal E_N(G_N)}
       {\Var_{N}(G_N)}
  =1 \comma
$$
and the finite-particle spectral gap is exactly equal to $1$ for every $N \ge 1$. 
The proof of  Theorem~\ref{prop:FG} is complete. 

\begin{remark}[Symmetry does matter] \label{rem:SDM}
Theorem~\ref{prop:FG} is a statement only on symmetric functions.  The proof of Lemma~\ref{lem:AG} essentially relies on the symmetry of $F$. 
A uniform spectral gap inequality for non-symmetric functions on $\C^N$  is indeed false.    For example, the non-symmetric observable
$F(z_1,\ldots,z_N)=\Re z_1$ has variance of order $N$, while its
Dirichlet energy is of order one.  Thus the  spectral gap is at most of order~$1/N$, which is not uniformly bounded below.
\end{remark}

\section{Proof of Theorem~\ref{thm:IG}} \label{s:PT}
\subsection{Projection estimate}
For simplicity, we use the notation~
$$m(\diff z)=e^{-|z|^2}\diff z \fstop$$ 
The functions
$$
  \varphi_k(z)=\frac{z^k}{\sqrt{\pi k!}},
  \qquad k=0,1,2,\ldots \comma
$$
form an orthonormal basis of 
$$\mathcal H_1= \left\{ f\in L^2(\C \to \C, m): f\text{ is entire holomorphic} \right\} \fstop$$  
Recalling~$\Psi_N=\frac{\Delta_N}{\sqrt{Z_N}}$, we have 
$$
  |\Psi_N|^2m^{\otimes N}=\mu_N \fstop
$$
We have the determinant expression
$$
  \Psi_N(z_1, \ldots, z_{N})
  =
  (-1)^{\frac{N(N-1)}{2}}\varphi_0\wedge\varphi_1\wedge\cdots
  \wedge\varphi_{N-1} (z_1, \ldots, z_{N}) \comma
$$
where $S_N$ is the $N$-permutation group and 
$$
\varphi_0\wedge\varphi_1\wedge\cdots
  \wedge\varphi_{N-1} (z_1, \ldots, z_{N}) 
  := \frac{1}{\sqrt{N!}}\det{[\varphi_{i-1}(z_{j})]}_{i, j = 1}^{N}
  = \frac{1}{\sqrt{N!}}\sum_{\sigma \in S_N}\sgn(\sigma)\prod_{i=1}^N \varphi_{i-1}(z_{\sigma(i)})
   \fstop
$$
For a compact set $B\subset\C$, define
\begin{equation}\label{eq:TM}
  \varepsilon_N(B)
  =
  \sum_{k=N}^{\infty}
  \int_B|\varphi_k(z)|^2\,m(\diff z)
  =
  \frac1\pi
  \int_B e^{-|z|^2}
  \sum_{k=N}^{\infty}
  \frac{|z|^{2k}}{k!} \diff z \fstop
\end{equation}
If $B\subset\{|z|\le R\}$, then
\begin{equation}\label{eq:TMV}
  \varepsilon_N(B)
  \le
  \frac{|B|}{\pi}
  \sum_{k=N}^{\infty}\frac{R^{2k}}{k!}
  \longrightarrow0 \comma
\end{equation}
where
$|B|=\int_B\diff z.$

For a compact set~$B \subset \C$, we say that a function~$F: \U \to \R$ is {\it $B$-local}  if $F(\gamma)=F(\gamma|_{B})$ for every $\gamma \in \U$.
Recall that $\mu^{[N]}$ is the push-forward $(\pi_N)_\# \mu_N$ on~$\U^{(N)}$. For every~$\mu^{[N]}$-integrable $F:  \U^{(N)} \to \C$
$$\int_{\C^N} \tilde F( z_1,\ldots, z_N) \diff \mu_N(z_1, \ldots, z_N)= \int_{\U^{(N)}} F(\gamma) \diff \mu^{[N]}(\gamma) \comma $$
 where $\tilde F$ is  {\it the symmetric representative of $F$} defined as~$\tilde F(z_1, \ldots, z_N):=F(\sum_{i=1}^N \delta_{z_i})$.
\begin{lemma}
\label{lem:LHV}
Let $F:\U \to \R$ be a bounded real-valued Borel-measurable $B$-local function for some compact
set $B\subset\C$.   Let $F_N: \C^N \to \R$ be the symmetric representative of $F|_{\U^{(N)}}$ for $N \ge 2$. 
Let
$$
  v_N
  =
  \left(
    F_N-\int  F_N\diff \mu_N
  \right)\Psi_N
  \cquad
  h_N=P_Nv_N \fstop
$$
Then
\begin{equation}\label{eq:HEB}
  \|h_N\|_{\mathcal L_N}^2
  \le
  4\|F\|_\infty^2
  \frac{|B|}{\pi}\,
  \varepsilon_N(B) \fstop
\end{equation}
In particular,
$$
  \|h_N\|_{\mathcal L_N}\longrightarrow0
  \qquad\text{as }N\to\infty \fstop
$$
\end{lemma}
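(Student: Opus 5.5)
The plan is to exploit the structure of $\mathcal H_N^{\anti}$ as the $N$-fold antisymmetric tensor power of $\mathcal H_1$, and to show that $h_N$ only sees "high" one-particle modes $\varphi_k$ with $k\ge N$. That is where $\varepsilon_N(B)$ enters.

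\textbf{Step 1: reduce to the part of $F_N$ supported near $B$.} As in the proof of Lemma~\ref{lem:AG}, $h_N=P_Nv_N\in\mathcal H_N^{\anti}$ and $h_N\perp\Delta_N$. Set $W:=F_N-F(\emptyset)$, where $\emptyset$ denotes the empty configuration. Since $v_N=W\Psi_N+c\Psi_N$ for a constant $c$, and $P_N\Psi_N=\Psi_N$, we get $h_N=\Pi(W\Psi_N)$. Here $\Pi$ is the orthogonal projection onto $\mathcal H_{N,0}^{\anti}=\mathcal H_N^{\anti}\ominus\Omega_N$, restricted to antisymmetric inputs. By $B$-locality, $W(z)=0$ unless some $z_j\in B$, and $|W|\le 2\|F\|_\infty$; this produces the factor $4\|F\|_\infty^2$. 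So everything reduces to estimating $\Pi(W\Psi_N)$.

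\textbf{Step 2: dominate $\Pi$ by a high-mode number operator.} The wedge products $\varphi_{k_1}\wedge\cdots\wedge\varphi_{k_N}$ with $k_1<\cdots<k_N$ form an orthonormal basis of $\mathcal H_N^{\anti}$. Among them, $\Psi_N$ corresponds (up to sign) to $\{0,\dots,N-1\}$, and every other index set contains some $k\ge N$.

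Let $Q:\mathcal L_1\to\mathcal H_1$ be the orthogonal projection onto $\overline{\mathrm{span}}\{\varphi_k:k\ge N\}$, and let $Q_i$ denote $Q$ acting in the variable $z_i$ with the identity in the other variables. On $\mathcal H_N^{\anti}$, the operator $\sum_iQ_i$ is the number of high modes, so it dominates $\Pi$. Since $P_N=P_1^{\otimes N}$, for antisymmetric $w\in\mathcal L_N$ this should give
\[
\|\Pi w\|^2\le\sum_{i=1}^N\|(Q_i\otimes P_1^{\otimes(N-1)})w\|^2\le\sum_{i=1}^N\|Q_iw\|^2 = N\|Q_1w\|^2 ,
\]
where the final equality uses antisymmetry.

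\textbf{Step 3: estimate $\|Q_1(W\Psi_N)\|$; this is the main obstacle.} $Q_1$ is the integral operator in $z_1$ with kernel $\sum_{k\ge N}\varphi_k(z)\overline{\varphi_k(w)}$ with respect to $m$. The difficulty is that $W$ is a genuinely multi-particle function: it need not vanish when $z_1\notin B$, because another particle may lie in $B$.

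First I would handle the contribution where $z_1\in B$. Using the pointwise bound $\sum_k|\varphi_k(z)|^2e^{-|z|^2}\le 1/\pi$ on the kernel side, and $\sum_{k\ge N}\int_B|\varphi_k|^2\diff m=\varepsilon_N(B)$ on the other, Cauchy--Schwarz (Hilbert--Schmidt) should give a bound of the form $\|Q_1\mathbf 1_B(z_1)\|^2\lesssim \frac{|B|}{\pi}\varepsilon_N(B)$.

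For the contribution where $z_1\notin B$ but some $z_j\in B$, I would use the determinantal structure of $\Psi_N$. Expanding $\Psi_N$ along the $z_1$-row involves only $\varphi_0,\dots,\varphi_{N-1}$, so $Q_1\Psi_N=0$; hence $Q_1$ can only pick up high modes through the $z_1$-dependence of $W$. To make this precise, I would rather apply the Step~2 bound with the roles arranged so that the high mode is always carried by a particle in $B$: decompose $W\Psi_N$ according to which particle lies in $B$ and use antisymmetry to move that particle to $z_1$. Concretely, $W=\sum_j W\,\mathbf 1_{\{z_j\in B,\ z_i\notin B\ \forall i<j\}}$, and on the $j$-th piece the only non-$\Psi$-type dependence is localized through $\mathbf 1_B(z_j)$. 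The factor $N$ from Step~2 must be absorbed by antisymmetry together with the $1/\pi$ kernel bound, which is what should turn $N\varepsilon_N$-type terms into $\frac{|B|}{\pi}\varepsilon_N(B)$.

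I expect this bookkeeping to be the delicate point. If it fails, the fallback is to test $h_N$ against each basis vector containing some $k\ge N$ and bound the sum of squared coefficients directly.

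Finally, \eqref{eq:TMV} gives $\varepsilon_N(B)\to0$, hence $\|h_N\|_{\mathcal L_N}\to0$.
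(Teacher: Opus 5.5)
Your Steps 1 and 2 are correct; they are the paper's Steps 1--2 in different notation. For antisymmetric $g$ one has $\sum_{k\ge N}\|a_kg\|^2_{\mathcal L_{N-1}}=N\|Q_1g\|^2_{\mathcal L_N}$, and your identity $Q_iP_N=P_1\otimes\cdots\otimes Q\otimes\cdots\otimes P_1$ does the job of $a_kP_N=P_{N-1}a_k$. So $\|h_N\|^2_{\mathcal L_N}\le N\|Q_1(W\Psi_N)\|^2_{\mathcal L_N}$ is established.

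The gap is Step 3, which is the substance of the lemma and which you do not carry out. The decomposition $W=\sum_jW\,\mathbf 1_{\{z_j\in B,\ z_i\notin B\ \forall i<j\}}$ does not localize anything, for two reasons:
\begin{itemize}
\item The pieces are not antisymmetric, so you cannot use antisymmetry to relabel particle $j$ as $z_1$ inside $\|Q_1\,\cdot\,\|$.
\item For $j\ge2$ the piece still depends on $z_1\in\C\setminus B$ through the indicator. On that event $W$ depends on every particle lying in $B$, not only on $z_j$, so the claim that its dependence is ``localized through $\mathbf 1_B(z_j)$'' is false.
\end{itemize}
Separately, the factor $|B|/\pi$ in the target bound does not come from the operator norm of $Q_1\mathbf 1_B(z_1)$. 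The Hilbert--Schmidt bound gives $\|Q\mathbf 1_B\|^2_{\mathrm{op}}\le\varepsilon_N(B)$ and nothing more. The $|B|/\pi$ comes from absorbing the factor $N$ into the one-point intensity, a step your sketch leaves unspecified.

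The missing idea is one you half-state: $Q_1$ acts only in $z_1$ and $Q_1\Psi_N=0$, so $Q_1(c\,\Psi_N)=0$ for every bounded $c=c(z_2,\dots,z_N)$. By $B$-locality, on $\{z_1\notin B\}$ the function $W$ does not depend on $z_1$ at all: it equals $W^{(1)}:=F(\sum_{i\ge2}\delta_{z_i})-F(\emptyset)$. Hence $W=W^{(1)}+\mathbf 1_B(z_1)(W-W^{(1)})$, and
\[
Q_1(W\Psi_N)=Q_1\bigl(\mathbf 1_B(z_1)\,D\,\Psi_N\bigr),\qquad D:=F(\gamma_z)-F(\gamma_z-\delta_{z_1}),\qquad |D|\le 2\|F\|_\infty .
\]
Fix $\eta=(z_2,\dots,z_N)$ and apply $\|Q\mathbf 1_B\|^2_{\mathrm{op}}\le\|Q\mathbf 1_B\|^2_{\mathrm{HS}}=\varepsilon_N(B)$ in $L^2(m)$. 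Integrating over $\eta$ gives
\[
N\|Q_1(W\Psi_N)\|^2_{\mathcal L_N}\le 4\|F\|_\infty^2\,\varepsilon_N(B)\,N\mu_N(z_1\in B).
\]
Finally, $N\mu_N(z_1\in B)=\int_BK_N(z,z)\diff z\le|B|/\pi$, since $K_N(z,z)\le 1/\pi$. This is exactly the paper's Steps 3--5, phrased there through $a_k\Psi_N=0$ for $k\ge N$ and the difference $D_zF(\gamma_\eta)$. It makes any decomposition according to which particle lies in $B$ unnecessary.
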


\begin{proof}
\paragraph{Step 1. annihilation and determinant}
Let $\mathcal L_N^{\anti} \subset \mathcal L_N$ be the subspace of antisymmetric elements. 
For $k\geq0$, define the fermionic annihilation operator
$$
  a_k:\mathcal L_N^{\anti}\longrightarrow
  \mathcal L_{N-1}^{\anti}
$$
defined as
$$
  (a_kg)(z_1,\ldots,z_{N-1})
  :=
  \sqrt N
  \int_{\C}
  \overline{\varphi_k(z)}
  g(z,z_1,\ldots,z_{N-1})\,m(\diff z) \fstop
$$

Let $\mathcal I_N=\{I \subset \N_0: \# I =N\}$. 
For~$I=\{i_1, \cdots, i_N\} \in \mathcal I_N$ with~$i_1 < \ldots < i_N \in \N_0$, 
let $\varphi_I$ be the determinant
$$
  \varphi_I
  =
   \varphi_{i_1}\wedge\cdots\wedge\varphi_{i_N} \fstop
$$
By the orthogonality of $(\varphi_{k})_{k \in \N_0}$, 
\begin{equation} \label{e:DAP}
\|a_k\varphi_I\|^2_{\mathcal L_{N-1}} = \1_{\{k \in I\}}  \fstop
\end{equation}
The functions $\{\varphi_I\}_{I \in \mathcal I_N}$ form an orthonormal basis of~$\mathcal H_N^{\anti}$.  If
$$
  h=\sum_{I \in \mathcal I_N} c_I\varphi_I \comma
$$
then, by~\eqref{e:DAP}, 
\begin{equation}\label{eq:NTI}
  \sum_{k=N}^{\infty}\|a_kh\|^2_{\mathcal L_{N-1}}
  =
  \sum_{I \in \mathcal I_N}
  \#\bigl(I\cap\{N,N+1,\ldots\}\bigr)|c_I|^2 \fstop
\end{equation}
The only $N$-element subset of $\{0,1,\ldots,N-1\}$ is
$$
  I_0=\{0,1,\ldots,N-1\} \comma
$$
and $\varphi_{I_0}=  (-1)^{\frac{N(N-1)}{2}} \Psi_N$.  Hence, if
$h\perp\Psi_N$ then $c_{I_0}=0$ and  every nonzero term in its expansion contains at least
one index greater than or equal to $N$, i.e., $\#\bigl(I\cap\{N,N+1,\ldots\}\bigr) \ge 1$ whenever $I \neq I_0$.  Thus, it follows from
\eqref{eq:NTI} that if
$h\perp\Psi_N$, then
\begin{equation}\label{eq:NTL}
  \sum_{k=N}^{\infty}\|a_kh\|^2_{\mathcal L_{N-1}}
  =
  \sum_{I \in \mathcal I_N}
  \#\bigl(I\cap\{N,N+1,\ldots\}\bigr)|c_I|^2 \ge   \sum_{I \in \mathcal I_N \setminus \{I_0\}} |c_I|^2 = \|h\|^2_{\mathcal L_N} \fstop
\end{equation}

\smallskip
\noindent\paragraph{Step 2. estimate of $h_N$} Since $\Psi_N$ is holomorphic,
$$
  \langle h_N,\Psi_N\rangle_{\mathcal L_N}
  =
  \langle v_N,\Psi_N\rangle_{\mathcal L_N}
  =
  \int
  \left(
    F_N-\int_{\C^N} F_N \diff \mu_N
  \right)\diff \mu_N
  =
  0 \fstop
$$
Thus $h_N\perp\Psi_N$ and we may therefore apply \eqref{eq:NTL} to $h_N$.
The annihilation operator by a holomorphic mode commutes with the
holomorphic projection in the following sense:
$$
  a_kP_N=P_{N-1}a_k \fstop
$$
Indeed, for $f \in \mathcal L_{N}^{\anti} \quad g \in \mathcal H_{N-1}^{\anti}$ 
$$\langle a_kP_Nf, g \rangle_{\mathcal L_{N-1}} =\langle P_Nf, a_k^*g \rangle_{\mathcal L_N} = \langle f, a_k^*g \rangle_{\mathcal L_N}  = \langle a_kf, g \rangle_{\mathcal L_{N-1}} =\langle P_{N-1}a_kf, g \rangle_{\mathcal L_{N-1}}  \comma$$
 where the second equality follows from $a_k^*g= \varphi_k \wedge g \in \mathcal H_{N}^{\anti}$, which implies $\langle P_Nf, a_k^*g \rangle_{\mathcal L_N} = \langle f-(I-P_N)f, a_k^*g \rangle_{\mathcal L_N}   = \langle f, a_k^*g \rangle_{\mathcal L_N}$, and the last equality follows similarly. 
 Thus, $(a_kP_Nf-P_{N-1}a_kf) \perp \mathcal H_{N-1}^{\anti}$.  As $P_{N-1}$ preserves antisymmetry, we have~$a_kP_Nf-P_{N-1}a_kf \in  \mathcal H_{N-1}^{\anti}$, which concludes $a_kP_Nf = P_{N-1}a_kf$.
 
Consequently, together with \eqref{eq:NTL} and the contraction property of the projection $P_{N-1}$, 
\begin{equation}\label{eq:PAB}
  \|h_N\|^2_{\mathcal L_N}
  \le
  \sum_{k=N}^{\infty}\|a_kh_N\|^2_{\mathcal L_{N-1}}
  \le
  \sum_{k=N}^{\infty}\|a_kv_N\|^2_{\mathcal L_{N-1}} \fstop
\end{equation}

\smallskip
\noindent\paragraph{Step 3. annihilation estimate I} For fixed
$$
  \eta=(z_1,\ldots,z_{N-1}) 
  \cquad
  \gamma_\eta=\sum_{i=1}^{N-1}\delta_{z_i} \comma
$$
we have
$$
  v_N(z,\eta)
  =
  \left(
    F(\gamma_\eta+\delta_z)-\int_{\C^N} F_N\diff \mu_N
  \right)\Psi_N(z,\eta) \fstop
$$
Hence, by the definition of $a_k$,
\begin{equation} \label{e:AEN}
  (a_kv_N)(\eta)
  =
  \sqrt N
  \int_{\C}
  \overline{\varphi_k(z)}
  \left(
    F(\gamma_\eta+\delta_z)-\int_{\C^N} F_N \diff \mu_N
  \right)
  \Psi_N(z,\eta)\,m(\diff z) \fstop
\end{equation}
For $k\geq N$, the mode $\varphi_k$ is not occupied by $\Psi_N$, 
therefore
$$
  (a_k\Psi_N)(\eta)
  =
  \sqrt N
  \int_{\C}
  \overline{\varphi_k(z)}
  \Psi_N(z,\eta)\,m(\diff z)
  =
  0 \fstop
$$
Since
  $F(\gamma_\eta)-\int_{\C^N} F_N \diff \mu_N$
is independent of $z$, multiplying the last identity by this quantity
shows that
$$
  \sqrt N
  \int_{\C}
  \overline{\varphi_k(z)}
  \left(
    F(\gamma_\eta)-\int_{\C^N} F_N \diff \mu_N
  \right)
  \Psi_N(z,\eta)\,m(\diff z)
  =
  0 \fstop
$$
We may subtract this zero term from the preceding expression~\eqref{e:AEN} for
$a_kv_N$.  The mean terms then cancel, and we obtain
\begin{equation}\label{eq:AID}
  (a_kv_N)(\eta)
  =
  \sqrt N
  \int_{\C}
  \overline{\varphi_k(z)}
  D_zF(\gamma_\eta)
  \Psi_N(z,\eta)\,m(\diff z) \cquad k \ge N \cquad
\end{equation}
where
$
  D_zF(\gamma_\eta)
  :=
  F(\gamma_\eta+\delta_z)-F(\gamma_\eta).
$
Since $F$ is a $B$-local function,
$$
  D_zF(\gamma_\eta)=0
  \qquad\text{for }z\notin B \fstop
$$
For fixed $\eta$, define
$$
  G_\eta(z)
  =
  \mathbf 1_B(z)\,
  D_zF(\gamma_\eta)\,
  \Psi_N(z,\eta) \fstop
$$
Since $D_zF(\gamma_\eta)=0$ for $z\notin B$, the identity
\eqref{eq:AID} can be written, for every
$k\geq N$, as
$$
  (a_kv_N)(\eta)
  =
  \sqrt N
  \int_{\C}
  G_\eta(z)\overline{\varphi_k(z)}\,m(\diff z)
  =
  \sqrt N\,
  \langle G_\eta,\varphi_k\rangle_{L^2(m)} \fstop
$$
Consequently,
$$
  |(a_kv_N)(\eta)|^2
  =
  N
  \left|
    \langle G_\eta,\varphi_k\rangle_{L^2(m)}
  \right|^2 \fstop
$$

\smallskip
\noindent\paragraph{Step 4. annihilation estimate II} Let $Q_N$ denote the orthogonal projection in $L^2(m)$ onto
$ \overline{{\rm span}}
  \{\varphi_N,\varphi_{N+1},\ldots\}$.
Thus
$$
  Q_NG_\eta
  =
  \sum_{k=N}^{\infty}
  \langle G_\eta,\varphi_k\rangle_{L^2(m)}
  \varphi_k \fstop
$$
Since the functions $\{\varphi_k\}_{k\geq N}$ are orthonormal,
Parseval's identity gives
$$
  \|Q_NG_\eta\|_{L^2(m)}^2
  =
  \sum_{k=N}^{\infty}
  \left|
    \langle G_\eta,\varphi_k\rangle_{L^2(m)}
  \right|^2 \fstop
$$
Therefore,
\begin{equation}\label{eq:PT}
  \sum_{k=N}^{\infty}|(a_kv_N)(\eta)|^2
  =
  N\|Q_NG_\eta\|_{L^2(m)}^2 \fstop
\end{equation}
Since $G_\eta$ is supported in $B$, we have
$
  G_\eta=\mathbf 1_BG_\eta$, 
where $\mathbf 1_B$ is regarded as the multiplication operator on
$L^2(m)$. Hence
$$
  Q_NG_\eta
  =
  Q_N\mathbf 1_BG_\eta \fstop
$$
By the definition of the operator norm,
$$
  \|Q_N\mathbf 1_BG_\eta\|_{L^2(m)}
  \le
  \|Q_N\mathbf 1_B\|_{\mathrm{op}}
  \|G_\eta\|_{L^2(m)} \fstop
$$
It follows that
\begin{equation}\label{eq:TOB}
  \|Q_NG_\eta\|_{L^2(m)}^2
  \le
  \|Q_N\mathbf 1_B\|_{\mathrm{op}}^2
  \|G_\eta\|_{L^2(m)}^2 \fstop
\end{equation}
Moreover, the Hilbert--Schmidt norm gives
$$
  \|Q_N\mathbf 1_B\|_{\mathrm{op}}^2
  \le
  \|Q_N\mathbf 1_B\|_{\mathrm{HS}}^2
  =
  \sum_{k=N}^{\infty}
  \int_B|\varphi_k(z)|^2\,m(\diff z)
  =
  \varepsilon_N(B) \fstop
$$
Combining this with \eqref{eq:PT} and
\eqref{eq:TOB}, we obtain
$$
  \sum_{k=N}^{\infty}|(a_kv_N)(\eta)|^2
  \le
  N\varepsilon_N(B)\|G_\eta\|_{L^2(m)}^2 \fstop
$$
Therefore, by the definition of $G_\eta$,
$$
  \sum_{k=N}^{\infty}|(a_kv_N)(\eta)|^2
  \le
  N\varepsilon_N(B)
  \int_B
  |D_zF(\gamma_\eta)|^2
  |\Psi_N(z,\eta)|^2\,m(\diff z) \fstop
$$

\smallskip
\noindent\paragraph{Step 5. conclusion} Integrating over $\eta$ and using
$|\Psi_N|^2m^{\otimes N}=\mu_N$, we obtain
\begin{align}
  \sum_{k=N}^{\infty}\|a_kv_N\|^2_{\mathcal L_{N-1}} 
  & \le N  \varepsilon_N(B)
  \int \1_{B}(z)
  |D_zF(\gamma_\eta)|^2
  |\Psi_N(z,\eta)|^2\,m(\diff z) \, m^{\otimes N-1}(\diff \eta) =:(I)
  \end{align}
  Taking $\gamma:=  \gamma_\eta + \delta_z$, we have $D_zF(\gamma_\eta)=F(\gamma_\eta +\delta_z) - F(\gamma_\eta) = F(\gamma)-F(\gamma-\delta_z)$.
 Writing $\gamma_z=\sum_{i=1}^N\delta_{z_i}$,  the RHS above can be further deduced to
  \begin{align}
  (I) &= N \varepsilon_N(B)
  \int \1_{B}(z_1)
  |F(\gamma_z)-F(\gamma_z-\delta_{z_1})|^2
  \,\mu_N(\diff z) 
  \\
   &= \varepsilon_N(B)
  \int \sum_{i=1}^N \1_{B}(z_i)
  |F(\gamma_z)-F(\gamma_z-\delta_{z_i})|^2
  \,\mu_N(\diff z)
  \\
  & = \varepsilon_N(B)
  \int
  \sum_{x\in\gamma\cap B}
  |F(\gamma)-F(\gamma-\delta_x)|^2
  \,\mu^{[N]}(\diff \gamma)
  \notag\\
  &\le
  4\|F\|_\infty^2
  \varepsilon_N(B)
  \int\gamma(B)\,\mu^{[N]}(\diff \gamma) \fstop
  \label{eq:AFB}
\end{align}
Since the correlation kernel density $K_N$ of $\mu^{[N]}$ is bounded by
\begin{equation} \label{e:KEN}
  K_N(z,z)
  =
  \frac1\pi e^{-|z|^2}
  \sum_{k=0}^{N-1}\frac{|z|^{2k}}{k!} \le \frac1\pi e^{-|z|^2}
  \sum_{k=0}^{\infty}\frac{|z|^{2k}}{k!} = \frac1\pi e^{-|z|^2} e^{|z|^2} 
  \le\frac1\pi \comma
\end{equation}
the intensity formula~$\int \gamma(B) \, \mu^{[N]}(\diff \gamma) = \int_B K_N(z,z) \diff z$  gives
\[
  \int\gamma(B)\,\mu^{[N]}(\diff \gamma)
  =
  \int_BK_N(z,z) \diff z
  \le\frac{|B|}{\pi} \fstop
\]
Combining this estimate with~\eqref{eq:PAB} and~\eqref{eq:AFB},  we conclude~\eqref{eq:HEB}.
The convergence follows from~\eqref{eq:TMV}.
\end{proof}

\subsection{Proof of Theorem~\ref{thm:IG}}
\begin{proof}[Proof of Theorem~\ref{thm:IG}]
\paragraph{Step 1. Local convergence} Recall that $\mu^{[N]}$ is the probability measure on $\U^{(N)}$ that  is the law of the $N$-point determinantal point process in $\C$ whose correlation kernel density is given by
$$
  K_N(z,w)
  =\frac1\pi
  \exp\left\{-\frac{|z|^2}{2}-\frac{|w|^2}{2}\right\}
  \sum_{k=0}^{N-1}\frac{(z\overline w)^k}{k!} \fstop
$$
For every compact set $B\subset\C$,
$$
  K_N\longrightarrow K
  \qquad\text{uniformly on }B\times B \fstop
$$
Indeed, this follows directly from the locally uniform convergence of the
Taylor series for the exponential.  Consequently, for every $m\in\N$,
the $m$-point correlation functions of $\mu^{[N]}$
$$
  \rho_N^{(m)}(z_1,\ldots,z_m)
  =
  \det\bigl[K_N(z_i,z_j)\bigr]_{i,j=1}^m
$$
converge uniformly on $B^m$ to the corresponding correlation
function of $\mu$.

The law of the restricted point process~$\gamma_N|_{B}$, where $\gamma_N \sim \mu^{[N]}$,  converges weakly to the law of $\gamma|_{B}$ where $\gamma \sim \mu$. 
 This follows, for example,
from the general convergence criterion for determinantal point fields
in \cite[Theorem~5]{SoshnikovDPP}.  
Indeed, the operators associated
with $K_N$ converge strongly to the operator associated with
$K$, while for every bounded set $A \subset \C$, 
$$
  \Tr(\mathbf 1_AK_N\mathbf 1_A)
  =
  \int_A K_N(z,z) \diff z
  \longrightarrow
  \int_A K(z,z) \diff z \fstop
$$
Moreover, the local particle numbers have uniformly bounded
exponential moments.  To see this, let $\gamma_N(B)$ denote the number of
particles in any fixed  compact set~$B$.  By the Bernoulli representation for determinantal
point processes
\cite[Theorem~7]{HoughKrishnapurPeresVirag},
$\gamma_N(B)$ has the same distribution as a sum of independent Bernoulli
random variables whose parameters are the eigenvalues $(\lambda_i^{(N)})_{i \in \N}$ of
the operator~$\mathbf 1_BK_N\mathbf 1_B$.  Recalling $K_N(z,z)\le \frac1\pi$ in~\eqref{e:KEN} and the inequality~$1+s \le e^s$ for $s \ge0$, we have that, 
for every $t\geq0$,
\begin{align*}
  \sup_N \mathbb E\bigl[e^{t\gamma_N(B)}\bigr] 
  &=   \sup_N \prod_{i=1}^\infty \Bigl( 1 + (e^t-1)\lambda_i^{(N)}\Bigr) \le   \sup_N \prod_{i=1}^\infty \exp\Bigl(\bigl\{ e^t-1\bigr\}\lambda_i^{(N)}\Bigr)
 \\
 & \le
  \exp\left\{
    (e^t-1)\sup_N\Tr(\mathbf 1_BK_N\mathbf 1_B)
  \right\}
  \le
  \exp\left\{
    \frac{e^t-1}{\pi}|B|
  \right\} \fstop
\end{align*}
In particular, all moments of $\gamma_N(B)$ are bounded uniformly in $N$.
It follows that expectations converge for continuous local observables
which are bounded by a polynomial in the number of particles in a
fixed compact set.

\noindent\paragraph{Step 2. Convergence on cylinder functions}
Take a cylinder function
$$
  F(\gamma)=\Theta\bigl(\langle f_1,\gamma\rangle,\ldots,
                   \langle f_m,\gamma\rangle\bigr) \fstop
$$
Let $\Var^{[N]}(F):=\int F^2 \diff \mu^{[N]} - (\int F \diff \mu^{[N]})^2$.
Since $F$ is a bounded, continuous and local function, 
the weak convergence of $\mu^{[N]}$ to $\mu$ implies
$$
  \Var^{[N]}(F)\longrightarrow \Var(F) \fstop
$$

We now discuss the energy part. 
Set
$$
  \Gamma(F)(\gamma)
  =\frac12\sum_{x\in\gamma}|\nabla_xF(\gamma)|^2 \fstop
$$
The function $\Gamma(F)$ is continuous with respect to the vague topology~$\tau_{v}$ in $\U$ (i.e.,~the topology induced by duality of compactly supported continuous real functions in~$\C$).
Since the $f_j$ have compact support and the first derivatives of $\Theta$ are
bounded, $\Gamma(F)$ is bounded by a constant times the number of particles in a
fixed compact set.  The kernels $K_N$ are locally uniformly bounded, so the
particle numbers in this compact set have uniformly bounded moments, which gives the uniform
integrability of  $\Gamma(F)$ regarding $\mu^{[N]}$.
Thus, combined with the weak convergence of $\mu^{[N]}$ to $\mu$ as probability measures in~$(\U, \tau_v)$,  
$$
  \mathcal E^{[N]}(F):=\int\Gamma(F)\diff\mu^{[N]}
  \longrightarrow
  \int\Gamma(F)\diff\mu
  =\mathcal E(F) \fstop
$$

\noindent\paragraph{Step~3. Spectral gap for infinite particles} 
We first prove the lower bound on the spectral gap.  Let $F$ be a  cylinder function. In particular, there is a compact set $B \subset \C$ such that $F$ is $B$-local. 
For each $N$, take the symmetric representative $F_N$ of $F|_{\U^{(N)}}$ and set
$$
  v_N
  =
  \left(
    F_N-\int F_N\diff \mu_N
  \right)\Psi_N
$$
and write
$$
  h_N=P_Nv_N
  \cquad
  r_N=v_N-h_N \fstop
$$
Since $h_N$ is the holomorphic projection of $v_N$,
we have $
  h_N\perp r_N
$
and
$$
  \Var^{[N]}(F)
  =
  \|v_N\|_{\mathcal L_N}^2
  =
  \|h_N\|_{\mathcal L_N}^2
  +
  \|r_N\|_{\mathcal L_N}^2 \fstop
$$
Lemma~\ref{lem:FG} gives
$$
  \|r_N\|_{\mathcal L_N}^2
  \le
  \sum_{i=1}^N
  \|\overline\partial_i v_N\|_{\mathcal L_N}^2 \fstop
$$
Moreover, $\Psi_N$ is holomorphic and $F$ is real-valued, so
$$
  \sum_{i=1}^N
  \|\overline\partial_i v_N\|_{\mathcal L_N}^2
  =
  \frac14
  \int_{\C^N}
  \sum_{i=1}^N|\nabla_iF_N|^2 \diff \mu_N
  =
    \frac12
  \int_{\U^{(N)}} \Gamma(F) \diff \mu^{[N]}
  =
  \frac12\mathcal E^{[N]}(F) \fstop
$$
Consequently,
\begin{equation}\label{eq:FSE}
  \Var^{[N]}(F)
  \le
  \frac12\mathcal E^{[N]}(F)
  +
  \|h_N\|_{\mathcal L_N}^2 \fstop
\end{equation}
Lemma~\ref{lem:LHV} yields
$$
  \|h_N\|_{\mathcal L_N}^2
  \le
  4\|F\|_\infty^2
  \frac{|B|}{\pi}\varepsilon_N(B) \comma
$$
and the right-hand side tends to zero.
By Step 2, 
$$
  \Var^{[N]}(F)
  \longrightarrow
  \Var(F) \cquad  \mathcal E^{[N]}(F)
  \longrightarrow
  \mathcal E(F) \fstop
$$
Letting $N\to\infty$ in
\eqref{eq:FSE}, we have
\begin{equation} \label{eq:SPG}
  \Var(F)
  \le
  \frac12\mathcal E(F)
\end{equation}
for every cylinder function~$F$. The extension to the closure $\mathcal D$ is standard by density argument.

\smallskip
\noindent\paragraph{Step~4. Optimality}  It remains to prove sharpness of the spectral gap $2$.  Fix a non-constant real-valued function
$f\in C_c^\infty(\C \to \R)$ and define
$
  f_R(z)=f(z/R)
$
and consider the centred linear statistic
$$
  F_R(\gamma)
  =
  \langle f_R,\gamma\rangle
  -
  \frac1\pi\int_{\C}f_R(z) \diff z \fstop
$$
By a standard cut-off argument, $F_R \in \mathcal D$. 
The variance formula for the Ginibre point process gives
\begin{equation}\label{eq:SL}
  \Var(F_R)
  =
  \frac1{2\pi^2}
  \iint_{\C^2}
  |f_R(z)-f_R(w)|^2
  e^{-|z-w|^2} \diff z \diff w \fstop
\end{equation}
On the other hand, by the intensity formula 
$$\int_\U \langle g, \gamma \rangle \mu(\diff \gamma) = \int_{\C} g K(z,z)\diff z =   \frac1\pi \int_{\C} g \diff z \qquad g \in C_c(\C \to \R) \comma
$$
we have
\begin{equation}\label{eq:SLE}
  \mathcal E(F_R)
  =
  \frac1{2\pi}
  \int_{\C}|\nabla f_R|^2 \diff z
  =
  \frac1{2\pi}
  \int_{\C}|\nabla f|^2 \diff z \fstop
\end{equation}
In \eqref{eq:SL}, let
$$
  z=Rx
  \cquad
  w=z+y \fstop
$$
Then
$$
  \Var(F_R)
  =
  \frac{R^2}{2\pi^2}
  \iint_{\C^2}
  \left|
    f(x)-f\left(x+\frac yR\right)
  \right|^2
  e^{-|y|^2} \diff x \diff y \fstop
$$
For every $x,y$,
$$
  R^2
  \left|
    f(x)-f\left(x+\frac yR\right)
  \right|^2
  \le
  |y|^2
  \int_0^1
  \left|
    \nabla f\left(x+\frac{ty}{R}\right)
  \right|^2 \diff t \fstop
$$
After integration in $x$, the right-hand side is bounded by
$$
  |y|^2\int_{\C}|\nabla f(x)|^2 \diff x \comma
$$
which is integrable against $e^{-|y|^2}\diff y$.  Dominated
convergence therefore gives
\begin{align}
  \lim_{R\to\infty}\Var(F_R)
  &=
  \frac1{2\pi^2}
  \iint_{\C^2}
  |\nabla f(x)\cdot y|^2
  e^{-|y|^2} \diff x \diff y
  \notag\\
  &=
  \frac1{4\pi}
  \int_{\C}|\nabla f(x)|^2\diff x \comma
  \label{eq:LVL}
\end{align}
where we used
$
  \int_{\R^2}y_i y_j e^{-|y|^2} \diff y
  =
  \frac{\pi}{2}\delta_{ij}.
$
Combining \eqref{eq:SLE} and
\eqref{eq:LVL}, we obtain
$$
  \frac{\mathcal E(F_R)}
       {\Var(F_R)}
  \longrightarrow2 \fstop
$$
Hence the spectral gap is at most $2$.  Together with
\eqref{eq:SPG}, this proves that the sharp spectral gap
is exactly $2$.
\end{proof}

\paragraph{Disclosure} 
Interactive discussions with an AI tool assisted in exploring proof ideas and polishing the language.  All mathematical details were  independently verified by the author, who takes full responsibility for their correctness.

\bibliographystyle{alpha}
\bibliography{/Users/suzukikouhei/MasterBib.bib}

\end{document}